\def \mc {\mathcal}
\def \a {\alpha}
\def \bt {\beta}
\begin{document}
\title{ Algebraic Systems for DNA Origami Motivated from  Temperley-Lieb Algebras
	%	\thanks{Supported by organization x.}
}
\titlerunning{Algebraic Systems for DNA Origami}
% If the paper title is too long for the running head, you can set
% an abbreviated paper title here
%
\author{James Garrett, Nata\v sa Jonoska, Hwee Kim, Masahico Saito \\
	%	\inst{1}\orcidID{0000-1111-2222-3333} \and
	%Second Author\inst{2,3}\orcidID{1111-2222-3333-4444} \and
	%Third Author\inst{3}\orcidID{2222--3333-4444-5555}
	%}
	%
	%\authorrunning{J. Garrett et al.}
	% First names are abbreviated in the running head.
	% If there are more than two authors, 'et al.' is used.
	%
	\institute{
	Department of Mathematics and Statistics, University of South Florida
	4202 E. Fowler Ave, Tampa, FL 33620, USA}
	%\email{lncs@springer.com}\\
	%\url{http://www.springer.com/gp/computer-science/lncs} \and
	%ABC Institute, Rupert-Karls-University Heidelberg, Heidelberg, Germany\\
	%\email{\{abc,lncs\}@uni-heidelberg.de}
}
\maketitle              % typeset the header of the contribution
\begin{abstract}
	We initiate an algebraic approach to study DNA origami structures by associating an element from a monoid to each structure. We identify two types of basic building blocks
	%DNA origami structures
	and describe an DNA origami structure with their composition. These building blocks are taken as generators of a monoid, called origami monoid,  and, motivated by the well studied Temperley-Lieb algebras, we identify a set of relations that characterize the origami monoid. We also present several observations about the Green's relations for the origami monoid and 
	study the relations to % show that
	a cross product of Jones monoids that is a morphic image of an origami monoid.\\[2mm]
	Key words: DNA origami,  Temperley-Lieb algebra, rewriting system.
\end{abstract}
\section{Introduction}\label{intro-sec}
%\todo{edit the introduction, later, to say what is in the paper and tie to DLT topics}

%\toedit{The following copied and edited from NSF prop}

In the past few decades, bottom-up assemblies at the nano scale have introduced new materials and molecular scaffoldings producing structures that have wide ranging applications (eg. \cite{Geim,Yaghi}),  even materials that seem to violate standard chemistry behavior (eg. \cite{Oganov}).
``DNA origami", introduced by Rothemund \cite{rothemund-folding-2006} in 2006, significantly facilitated the construction of $\sim100 \times 100 ${\it nm} 2D DNA nanostructures.  The method
typically involves combining an M13 single-stranded cyclic viral molecule called {\it scaffold}  with 200-250 short
{\it staple strands} to produce about  100{\it nm} diameter 2D shapes \cite{rothemund-folding-2006}, %although DNA origami method can also 
and more recently also to produce a variety of 3D constructs (e.g. \cite{D+09}).
%The method uses  a standard single-stranded cyclic vector plasmid (scaffold)  to outline a shape while
%short DNA strands connect portions
%of the plasmid fixing its shape in a rigid form.
Fig.~\ref{origami_clean} (left) shows a schematic of an origami structure,
where the black thick line outlines a portion of the cyclic vector plasmid outlining the shape, and the colored lines are schematics of the short strands that keep the cyclic molecule folded in the shape. 
Because the chemical construction of DNA origami is much easier than %the smaller tiles,
previous methods, this form of DNA nanotechnology has become popular, with perhaps 300 %of the over 350
laboratories in the world today focusing on it.

Although numerous laboratories around the world are successful in achieving various shapes with DNA origami,
theoretical understanding and characterizations of these shapes is still lacking.
%In this paper, we address the question of global characterization of DNA origami assembles and initiate an algebraic modeling of DNA origami. 
With this paper we propose  an
algebraic system
% in terms of string rewriting systems and monoid structures
to  describe and investigate DNA origami structures. The staple strands usually have $2$-$4$ segments of about 8 bases joining $2$-$3$ locations (folds) of the scaffold. All cross-overs between two staple strands and between two neighboring folds of the scaffold are antiparallel. We divide the DNA origami structure to local scaffold-staples interactions and to such local interactions we associate a generator of a monoid which we call an {\it origami monoid}.
The origami monoid we present here is closely related to the
Jones monoid~\cite{BDP,IdemP} which is in fact considered as the  quotient of the well studied Temperley-Lieb algebra~\cite{temperley}. We show that a DNA origami structure can be described as an element of an origami monoid (a word over the set of generators) and propose a set of rewriting rules that are plausible for DNA segments to conform in DNA origami.
The set of rewriting rules in some sense describe the equivalence classes of the DNA origami structures. The number of
generators of an origami monoid depends on the number of parallel folds of the scaffold in the DNA origami.
We show that a cross product of two Jones monoids is a surjective image of an origami monoid, and
we study the structure of the origami monoids  through the Greens relations.
We characterize the origami monoids for small number of scaffold folds  and propose several conjectures for the general origami monoids.

%\vskip -20pt

\begin{figure}[h!]
	\centering
	\begin{minipage}[h]{.4\textwidth}
		\centering
		\includegraphics[scale=.4]{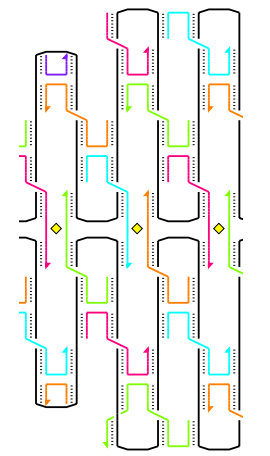}
		%\caption{The original DNA diagram from \cite{rothemund}}
		%\label{origami_clean}
	\end{minipage}%
	\begin{minipage}[h]{.48\textwidth}
		\centering
		\includegraphics[scale=.3]{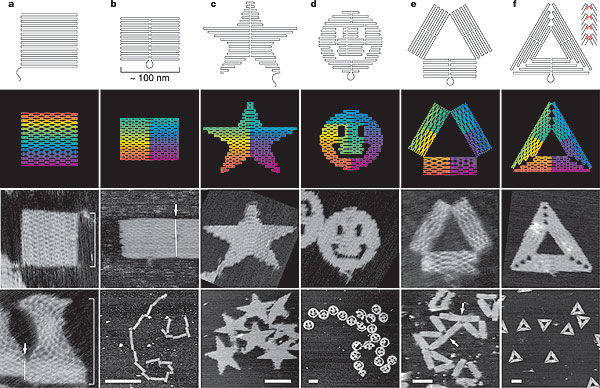}
		%\caption{Various shapes made by DNA origami from \cite{rothemund}}
		\label{DNAshapes}
	\end{minipage}
	\caption{(Left) A schematic figure of DNA origami structure with scaffold in black and staples in color,  (Right) Various shapes made by DNA origami, both from  \cite{rothemund-folding-2006}}
	\label{origami_clean}
\end{figure}

\vspace{-1cm}

% The paper is organized as follows. After brief preliminaries in Section~\ref{sec:prelim}, sets of generators are proposed for rewriting systems and monoids in Section~\ref{sec:gen} that are motivated from actual DNA origami structures (Fig.~\ref{origami_clean} (left))
% and their similarities to the diagrams of Temperley-Lieb algebras.
% Plausibility of DNA folding is examined in Section~\ref{concat} that corresponds to concatenations of diagrams of corresponding generators. Plausible DNA  conformations and modifications lead to proposed  sets of rewriting rules and monoid relations (Section~\ref{sec:writing}).
% Finiteness is established for monoids with small number of strings, and their  monoid structures are investigated in Section~\ref{sec:monoid}.

%%%%%%%%%%%%%%%%%%%%%%%%%%%%%%%%%%%%%%%%%%%%%
\section{Preliminaries}\label{sec:prelim}
%%%%%%%%%%%%%%%%%%%%%%%%%%%%%%%%%%%%%%%%%%%%

\subsection{Temperley-Lieb algebra and Jones monoids}

Temperley-Lieb algebras have been used in many fields,
particularly in physics and  knot theory (see, for example, \cite{temperley,BDP,Kauff,LF}). A generator of the algebra $h_i$  where there are $i-1$ strings to the left is depicted in Fig.~\ref{lieb1}(A) \cite{Kauff}.
 Multiplication of two elements corresponds to  concatenation of diagrams, placing the diagram of the first element on top of the second. The relations of the algebra follow the diagrams depicted in Fig.~\ref{lieb1}(B), (C) and (D) where $\delta$ is an element of a ring. Here we use the monoid versions of Temperley-Lieb algebras,  called Jones monoids 
% (also known as Kauffman monoids),
\cite{BDP,LF}. 
The Jones monoid is obtained by taking $\delta=1$. 
Thus we consider, for each $n$,  the Jones monoid ${\mathcal J}_n$ generated by $h_i$,
$i=1, \ldots, n-1$, and relations
$$
(B) \,\, h_ih_jh_i = h_i\ \text{ for } \ |i-j|=1,\
(C) \,\, h_ih_i = h_i \ %\text{ and }
\
(D) \,\, h_ih_j =  h_jh_i \ \text{ for } \ |i-j|\geq1.
$$

%For a ring $R$,  a fixed constant  $\delta \in R$, and a positive integer $n$,  the Temperley-Lieb algebra 
%${\rm TL}_n$ is defined as an associative algebra
%over  $R$
%with the generators $h_1,\dots, h_{n-1}$ and three classes of relations
%\begin{eqnarray*}
%  U_iU_jU_i &=&  U_i\ \text{ for } \ |i-j|=1,\\
%  U_iU_i &=&  \delta U_i,\\
%  U_iU_j &=&  U_jU_i \ \text{ for } \ |i-j|\geq1.
%  \end{eqnarray*}
%$$
%(i) \   h_ih_jh_i = h_i\ \text{ for } \ |i-j|=1,\
%(ii) \ h_ih_i = \delta h_i \ \text{ and } \
%(iii) \ h_ih_j =  h_jh_i \ \text{ for } \ |i-j|\geq 2 .
%$$
%The generators and relations can be represented pictorially as in Fig.~\ref{lieb1} (see \cite{Kauff}, for example).
%The elements of TL$_n$have representations with diagrams that have $n+1$ endpoints at the top (resp. bottom) of the diagrams.

\vskip -20pt

\begin{figure}[h]
	\centering
	\includegraphics[scale=0.45]{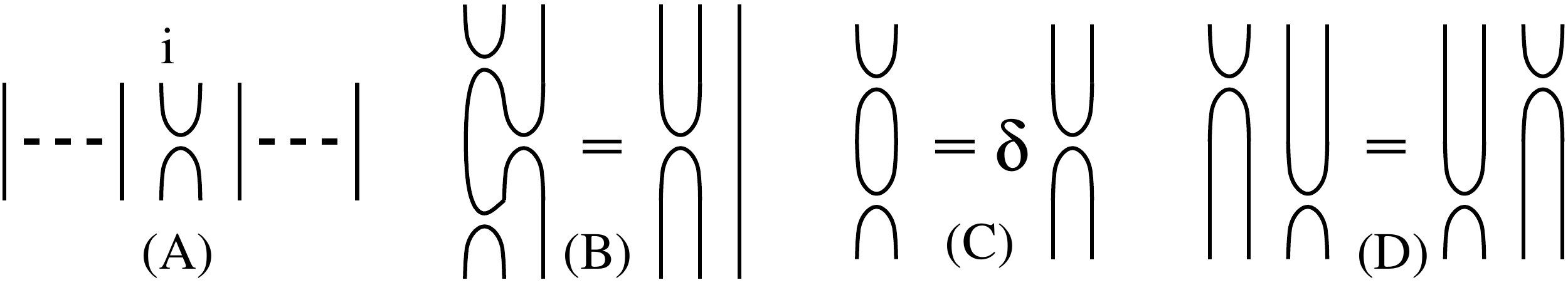}
	\caption{The generators (A) and relations (B,C,D) of the Temperley-Lieb algebra
		%\cite{temperley}
	}
	\label{lieb1}
\end{figure}

%In the figure, the arcs connecting points in the top row are referred to as cups, and those in the bottom row as caps. 
%The generator $h_i$ is represented by the diagram in (A) where there are $i-1$ strings to the left.
%top end point of a cup is the $i$-th string from the left.
% Multiplication of two elements corresponds to  concatenation of diagrams, placing the diagram of the first element on top of the second, and replacing closed loops with $\delta$.
%The relations (i), (ii) and (iii) can also be expressed pictorially as in (B), (C) and (D), respectively.
%Temperley-Lieb algebras have been used in many fields,
%particularly in physics and  knot theory.

%The monoid versions of Temperley-Lieb algebras,  called Kauffman monoids
%and Jones monoids, have been also studied~\cite{BDP,LF}.
%In Jones monoids the loop generator $\delta$ is not taken into account and we follow the same approach here,
%In our paper the variable (generator)  representing small loos are not taken into considerations, in light of
%with a motivation to %from
%simplify the  DNA origami structures.
%The versions with such a generator will be of separate theoretical interest.
%Thus we consider, for each $n$,  the Jones monoid %${\mathcal J}_n$ generated by $h_i$,
%$i=1, \ldots, n-1$, and relations
%$$
%h_ih_jh_i = h_i\ \text{ for } \ |i-j|=1,\
%h_ih_i = h_i \ \text{ and } \
%h_ih_j =  h_jh_i \ \text{ for } \ |i-j|\geq1.
%$$
%and call them {\it Jones} monoid.

\vspace{-1cm}

%%%%%%%%%%%%%%%%%%%%%%%%%
\subsection{String rewriting system}\label{sec:string}
%%%%%%%%%%%%%%%%%%%%%%%%%

%We  establish notation for string rewriting systems.
An alphabet $\Sigma$ is a non-empty finite set of symbols. 
A word over $\Sigma$ is a finite sequence of elements (symbols) from $\Sigma$ and $\Sigma^*$ is the set of all words over $\Sigma$.
%is denoted $\Sigma ^*$. 
This set includes the empty string, the word containing no symbols, often written as 1. A word $u$ is called a {\it factor of a word $v$} if there exist words $x$ and $y$, which may be empty, such that $v = xuy$. Note that this is also sometimes referred to as a subword.
%, but here we shall use the term ``factor''.

A string rewriting system, $(\Sigma,R)$ consists of an alphabet $\Sigma$ and a set of rewriting rules, $R$, which is a binary relation on $\Sigma^*$. An element $(x,y)$ of $R$ is called a rewriting rule, and is written $x \rightarrow y$.
We extend $R$ to factors of words %can define another relation induced by $R$,
$\xrightarrow[\text{R}]{}$, where for any $s,t\in \Sigma^*$, $s\xrightarrow[\text{R}]{}t$ if  there exist $x,y,u,v \in \Sigma^*$ such that $s = uxv$, $t=uyv$, and $x\rightarrow y$. %This relation allows us to also rewrite factors of words.
We also write $s\rightarrow t$ for simplicity if no confusion arises.

If there is a sequence of words $u=x_1 \rightarrow x_2 \rightarrow \cdots \rightarrow x_n=v$ in a rewriting system
$(\Sigma^*, R)$, we write $u \rightarrow_* v$.
An element $x \in \Sigma^*$  is {\it confluent} if for all $y, z \in \Sigma^*$ such that
$x \rightarrow_* y$ and $x \rightarrow_* z$, there exists $w \in \Sigma^*$
such that $y \rightarrow_* w$ and $z \rightarrow_* w$. If all words in $\Sigma^*$ %elements
are confluent,
then $(\Sigma^*, R)$ is called {\it confluent}. In particular, if $R$ is symmetric, then the system $(\Sigma^*, R)$ is confluent.

%%%%%%%%%%%%%%%%%%%%%%%%%%%%%%%%%%%%%%%%%%
\subsection{Monoids and  Green relations}
%%%%%%%%%%%%%%%%%%%%%%%%%%%%%%%%%%%%%%%%%

%\todo{describe structure of finite monoids and Greens relations (Perhaps move the section on monoids before the one on rewriting systems, then discuss monoid presentation by rewriting rules, and finally Greens relations, specifically in reference to finitely presented monoids)}

A monoid is a pair $(M,\cdot)$ where $M$ is a set and $\cdot$ is an %algebraic structure with one
associative binary operation on $M$ that has an identity element $1$. The set $\Sigma^*$
%,  the set of all words of finite length over $\Sigma$, 
is a (free) monoid generated by  $\Sigma$ %is the monoid of $\Sigma^*$
with word concatenation as the binary operation, and the empty string as the identity element.
% (the monoid itself is often referred to as $\Sigma^*$, but we will reserve that for the set). The identity element for this monoid is called the ``empty word'', and is the word consisting of no elements.
Presentations of monoids are defined from the free monoid in a manner similar to presentations of groups.

For a monoid $M$, the {\it principal left (resp. right) ideal} generated by  $a \in M$  is defined by $Ma=\{ xa\ | \ x \in M\}$ (resp. $aM$), and the {\it principal two-sided ideal} s
$MaM$. Green's relations $\mathscr{L}$, $\mathscr{R}$, and $\mathscr{J}$ are defined for $a, b \in M$ by
$a \mathscr{L} b$ if $Ma=Mb$, $a \mathscr{R} b $ if $aM=bM$ and $a \mathscr{J} b$ if $MaM=MbM$.
Green's $\mathscr{H}$ relation is defined by $a \mathscr{H} b$ if $a \mathscr{L} b$ and $a \mathscr{R} b$. 
Green's $\mathscr{D}$ relation is defined by $a \mathscr{D} b$ if there is $c$ such that $a \mathscr{L} c $ and $c \mathscr{R} b$. The equivalence classes of $\mathscr{L}$ are called $\mathscr{L}$-classes, and similarly for the other relations. In a finite monoid, $\mathscr{D}$ and $\mathscr{J}$ coincide. 
%add:
The $\mathscr{D}$-classes are represented in a matrix form called {\it egg boxes}, where the rows represent $\mathscr{R}$-classes, columns $\mathscr{L}$-classes, 
and each entry is a box containing elements of $\mathscr{H}$-classes. 
See \cite{Pin} for more details.

\begin{example}\label{ex:J3}
	{\rm
		
		In \cite{IdemP}, $\mathscr{D}$-classes are obtained for Jones and related monoids. Here we include an example of a $\mathscr{D}$-class of ${\cal J}_3$, 
		which has a class consisting of the identity element and another class of $(2 \times 2)$-matrix below, such that each element is a box of an $\mathscr H$-class: %has a single element:
		\[
		\left[
		\begin{array}{rr}
		h_1 & h_1 h_2 \\
		h_2 h_1 & h_2 
		\end{array}
		\right]
		\]
		where rows   $\{ h_1 , h_1 h_2 \} $, $\{ h_2 h_1, h_2 \}$, are the  $\mathscr{R}$-classes   %(check: $\times h_i $)
		and  columns  $\{ h_1 , h_2 h_1 \} $, $\{ h_1 h_2, h_2 \}$ are the $\mathscr{L}$-classes in this $\mathscr D$-class.  %(check: $ h_i \times $)
		In particular, we see that multiplying $h_1$ and $h_1h_2$ with $h_i$ to the right 
		%of $\{ h_1 , h_1 h_2 \} $ and  $\{ h_2 h_1, h_2 \}$ 
		gives rise to the same right ideal. 
	}
\end{example}

%%%%%%%%%%%%%%%%%%%%%%%%%%%%%%%%%%%%%

%%%%%%%%%%%%%%%%%%%%%%%%%%%%%%%%%%%%%
\section{Origami monoid $\mc O_n$}\label{sec:monoid}
%%%%%%%%%%%%%%%%%%%%%%%%%%%%%%%%%%%%%

%%%%%%%%%%%%%%%%%%%%%%%%%%%%%%%%%%%%%
\subsection{Generators}\label{sec:gen}
%%%%%%%%%%%%%%%%%%%%%%%%%%%%%%%%%%%%%

%\todo{explain the choice of the generators, and tie to T-L algebra}

Here we identify  simple building blocks in DNA origami structures. With each
block type we associate a generator of a monoid and derive string rewriting systems to describe DNA structures.
%Our correspondence is motivated by the following aspects.
We have two motivations for our choices. (1) In Fig.~\ref{origami_clean}(left), one notices repeated patterns of %a few
simple building blocks whose concatenation builds  a larger structure.
%is made by concatenating these simple patterns.
One type of these patterns is a cross-over by the staple strands, and the other is a cross-over of the scaffold strand.
Thus, a natural approach to describe DNA origami %these
structures symbolically is to associate generators  of an algebraic system to %identify
simple building blocks, and to take multiplication in the system to be presented as % as generators, and
concatenation of the blocks. %as algebraic operations.
(2) In knot theory, a knot diagram is decomposed into basic building blocks of crossings or tangles. For the Kauffman bracket version of the Jones polynomial~\cite{Kauff}, for example, the Temperley-Lieb algebras, %were used
whose generators resemble building blocks observed in Fig.~\ref{origami_clean}(left), are used.
The Temperley-Lieb algebras have been extensively studied in physics and knot theory,
so that algebraic structures of their  variants  are also
of interest, besides their relations to  DNA origami.

%\begin{figure}[h!]
%	\centering
%	\begin{subfigure}[h]{.5\textwidth}
%		\centering
%		\includegraphics[width=.3\linewidth]{}
%		\caption{$\alpha$}
%		\label{alpha}
%	\end{subfigure}%
%	\begin{subfigure}[h]{.5\textwidth}
%		\centering
%		\includegraphics[width=.3\linewidth]{}
%		\caption{$\beta$}
%		\label{beta}
%	\end{subfigure}
%	\caption{The two identified building blocks}
%	\label{generators}
%\end{figure}

%Note that in Fig. \ref{clean}, there is only one string of scaffold. When we describe our generators, we will consider separate strings of scaffold for now, and describe a process of connecting them later.

For a positive integer $n$ we define a monoid ${\mathcal O}_n$,  % of monoids whose presentations and normal forms of elements follow  string rewriting systems. An integer
where $n$ represents the number of vertical double stranded DNA strands, that is,
$n$ is the number  of parallel folds of the scaffold. For the structure in Fig.~\ref{origami_clean}, $n=6$. The generators of ${\mathcal O}_n$ are denoted by $\alpha_i$ (anti-parallel staple strands cross-over)  and $\beta_i$ (antiparallel scaffold strand cross-over) for $i=1, \ldots, n-1$,
%They represent building blocks observed in Fig.~\ref{origami_clean}
as depicted in Fig.~\ref{generators2}.
The subscript  $i$ represents the position of the left scaffold for $\alpha_i$ and $\beta_i$, respectively, by starting at 1 from the leftmost scaffold strand fold and counting %moving
right (Fig.~\ref{context}).

Because DNA is oriented $5'$-$3'$ and the strands in the double stranded DNA are oppositely oriented, we define an orientation within the generators. Because parallel scaffold strands are obtained by folding of the scaffold, %We define the direction (orientation of single stranded DNA segments) of the scaffold and staples as follows:
consecutive scaffold strands run in alternating directions, while staple strands run in the opposite direction to the scaffold, and by default we take that the first scaffold runs in an upwards direction. In this way, the direction of the scaffold/staple strands for any particular $\alpha_i$ or $\beta_i$ depends entirely on the parity of $i$, as shown in Fig. \ref{generators2}.

\vspace{-.5cm}

\begin{figure}[h!]
    \begin{minipage}{.65\textwidth}
	\centering
	\begin{subfigure}[h]{.25\textwidth}
		\centering
		\includegraphics[scale=.2]{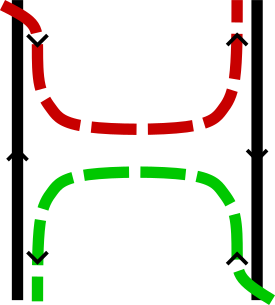}
		\caption{$\alpha_i$, $i$ odd}
		\label{alpha2}
	\end{subfigure}%
	\begin{subfigure}[h]{.25\textwidth}
		\centering
		\includegraphics[scale=.2]{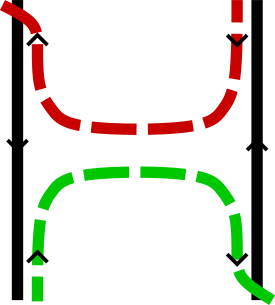}
		\caption{$\alpha_i$, $i$ even}
		\label{alpha3}
	\end{subfigure}%
	\begin{subfigure}[h]{.25\textwidth}
		\centering
		\includegraphics[scale=.2]{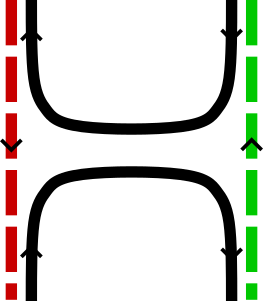}
		\caption{$\beta_i$, $i$ odd}
		\label{beta2}
	\end{subfigure}%
	\begin{subfigure}[h]{.25\textwidth}
		\centering
		\includegraphics[scale=.2]{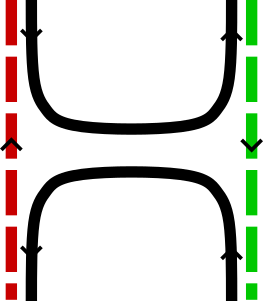}
		\caption{$\beta_i$, $i$ even}
		\label{beta3}
	\end{subfigure}
	\caption{The generators identified}
	\label{generators2}
	
	\end{minipage}%
	%\hspace{.4cm}
	\begin{minipage}{.30\textwidth}
	    \centering
	\includegraphics[scale=.2]{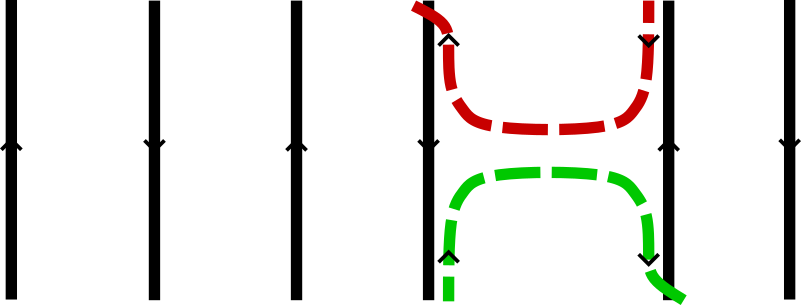}
	\captionsetup{justification=centering}
	\caption{$\alpha_4$ in the context of a 6-fold stranded structure.}
	\label{context}
	\end{minipage}\vspace{-.5cm}
\end{figure}

Fig. \ref{context} shows $\alpha_4$ as an example of the ``full picture'' of one of these generators. For the sake of brevity, % and since it is unlikely to cause any confusion,
we neglect to draw the extra scaffold and staple strands in most diagrams, but it may be helpful to imagine them when we describe their concatenation.
As in Fig.~\ref{context}, parallel scaffolds in generators do not have counterpart parallel staples.

%%%%%%%%%%%%%%%%%%%%%%%%%%%%%%%%%%%%%%
\subsection{Concatenation as a monoid operation}\label{concat}
%%%%%%%%%%%%%%%%%%%%%%%%%%%%%%%%%%%%%

%Now we define how to concatenate these generators.
To justify modeling DNA origami structures by words in the generators we make  a correspondence between concatenations of generators $\alpha_i$, $\beta_i$ and concatenations of DNA segments. For a natural number $ n\geq2$, the set of generators of the monoid $\mc O_n$ is %define an alphabet
the set $\Sigma_{n}=\{\alpha_{1},\alpha_{2},\dots,\alpha_{n-1},\beta_{1},\beta_{2},\dots,\beta_{n-1}\}$.
%, which we call an alphabet.
%For a fixed positive integer $n$, and consider words in generators $\alpha_i$ and $\beta_i$ for $i=1, \ldots, n-1$.
For a product of two generators $x_i$ and $y_j$
in $\Sigma_n$,
we place the diagram of the  first generator above the second, lining up the scaffold strings of the two generators
and then, %. Next,
we connect each respective scaffold string. If the two generators are adjacent, that is, if for indices $i$ and $j$ 
%of the two generators, if 
it holds $|i-j|\leq1$, then we also connect their staples as described below. Otherwise, if $|i-j|\ge 2$, no staple connection is performed and the concatenation %our process
is finished.

We define a convention of connecting staples for adjacent generators, which is motivated from the manner in which
staples connect in Fig.~\ref{origami_clean}. Note how the staples of $\alpha$-type protrude ``outside" of the scaffold in Fig.~\ref{generators2}. We refer to these ends of a staple as an ``extending staple-ends'', and all other staple ends
as ``non-extending staple-ends''. We connect staples everywhere %wherever possible,
\textit{except} when two non-extending staple-ends would have to cross a scaffold to connect
(recall that the scaffold strands are connected first), as can be seen in Fig. \ref{alphabeta} and Fig. \ref{combinationgroup6}.

Our choice of coloring staples in the figure is arbitrary, and we re-color staples in the same color
if they get connected when we concatenate generators. % for the sake of clarity. Also
By exhausting all possibilities, one can see that under our convention of connection, the staples remain short by concatenation without joining more than three scaffold folds.
Note that concatenation of three or more generators is associative because generators can be
connected in an associative manner %achieved by simply concatenating them associatively and
following the rules described above.

%\vspace{-.5cm}

%A selection of figures for the concatenation of two adjacent generators is given below with the aim of clarifying our convention of connecting staples.

\begin{figure}[h!]
	\centering
	\begin{minipage}[h]{.33\textwidth}
		\centering
		\includegraphics[scale=.13]{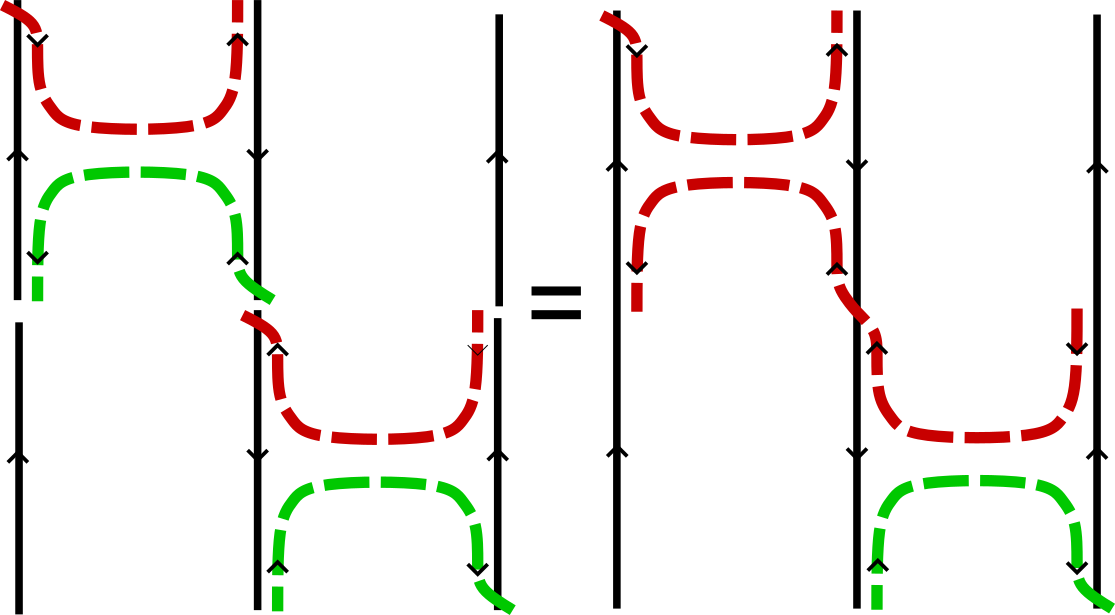}
		\caption{$\alpha_i\alpha_{i+1}$, $i$ odd}
		\label{concatexample1}
	\end{minipage}%
	\begin{minipage}[h]{.33\textwidth}
		\centering
		\includegraphics[scale=.13]{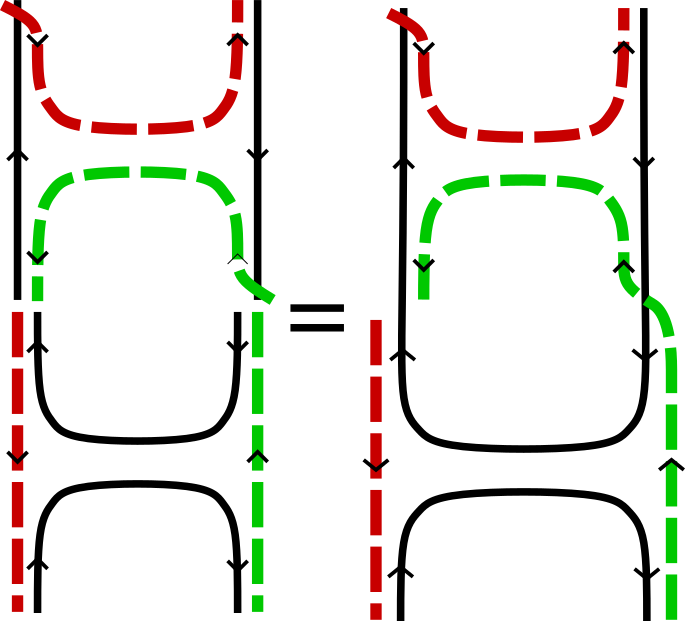}
		\caption{$\alpha_i\beta_i$, $i$ odd}
		\label{alphabeta}
	\end{minipage}
	\begin{minipage}[h]{.33\textwidth}
		\centering
		\includegraphics[scale=.13]{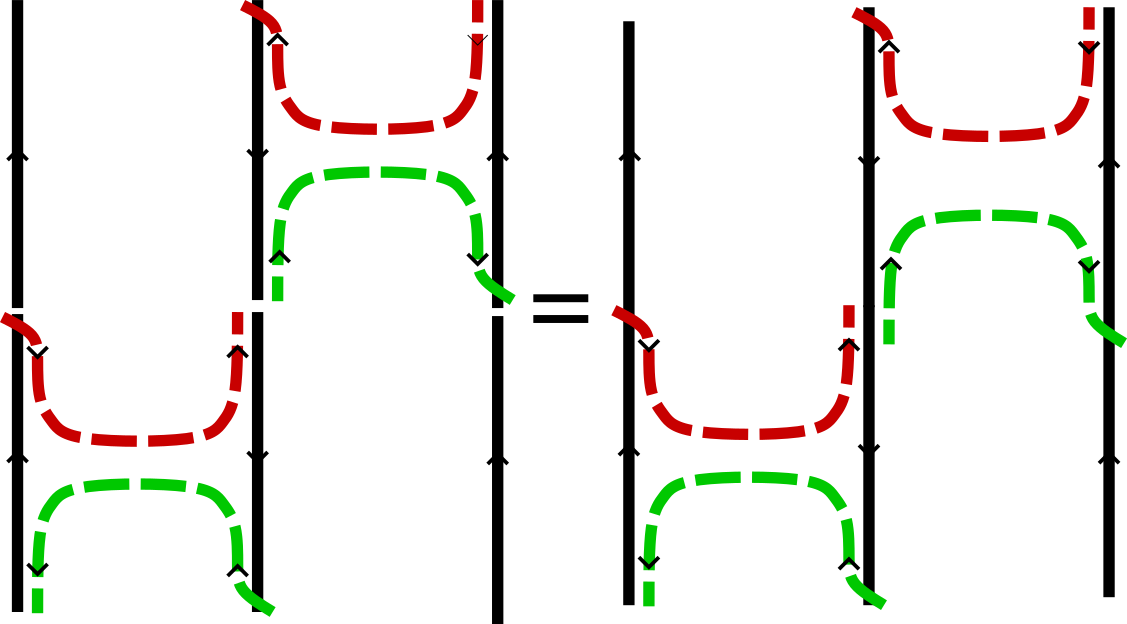}
		\caption{$\alpha_i\alpha_{i-1}$, $i$ odd}
		\label{combinationgroup6}
	\end{minipage}
\end{figure}

%\vspace{-1cm}

%\input{chapter4.tex}
%Chapter Five
%%%%%%%%%%%%%%%%%%%%%%%%%%%%%%%%%%%%%%%%%%%%
\subsection{Relations in $\mc O_n$}\label{sec:writing}
%%%%%%%%%%%%%%%%%%%%%%%%%%%%%%%%%%%%%%%%%%%%

%\todo{explain the choice of the rewriting operations, what do the elements say mod relations} :

\begin{sloppypar}
%	We define rewriting systems and monoids whose elements model DNA origami structures.
	The rewriting rules (i.e., the relations within the monoids) %we propose
	are motivated from similarity between the DNA origami structures as seen in Fig.~\ref{origami_clean}(left) and the diagrams of Temperley-Lieb algebras in Fig.~\ref{lieb1}.
	It is deemed that the relations of  Temperley-Lieb algebras simplify the DNA origami structure, and may be useful for designing efficient and more solid structures by the rewriting rules proposed below.
	The figures in this section are for justifying feasibility of corresponding DNA structures, and to represent rewriting system diagrammatically.
\end{sloppypar}

%%%%%%%%%%%%%%%%%%%%%%%%%%%%%%%%%%%%
\subsubsection{Rewriting rules.}
%%%%%%%%%%%%%%%%%%%%%%%%%%%%%%%%%%%%
For %a natural number $ n\geq2$, let 
$\Sigma_{n}=\{\alpha_{1},\alpha_{2},\dots,\alpha_{n-1},\beta_{1},\beta_{2},\dots,\beta_{n-1}\}$, %which we call an alphabet. We 
we establish a set of rewriting rules that allows us simplification of the DNA structure description.
%and rewriting of the words. 
Define a string rewriting system $(\Sigma_{n},R)$ as follows.
%Let $D_n = \Sigma_{n}^*/R$, the free monoid divided by the rewriting relations, which are equivalence relations.

 To ease the notation, we define bar on $\Sigma_{n}$ by $\overline{\alpha_i}=\beta_i$ and $\overline{\beta_i}=\alpha_i$, and  extend this operation to the free monoid by defining $\overline{w}$ for a word $w$ by applying bar to each letter of $w$.
%We start by listing t
%The set of fundamental rewriting rules are defined as follows, and
%additional sets of rules arising form the DNA origami structures are considered in Subsection~\ref{sec:rewriting2}.
Let $\gamma \in \{\alpha,\beta\}$ and $i \in \{1,\dots,n-1\}$, then we have:
%$\gamma=\alpha$, $\gamma_{1}=\alpha_{1}$, and $\overline{\gamma_{1}}=\beta_{1}$. Then we define the following rewriting rules.

$$
\begin{array}{llrlll}
{\rm (1)}\  & {\rm (Idempotency)} & \gamma_{i}\gamma_{i} & \rightarrow &\gamma_{i} & \\
{\rm (2)}\  & {\rm (Left \ TL \ relation)} &\gamma_{i}\gamma_{i+1}\gamma_{i} & \rightarrow & \gamma_{i} & \\
{\rm (3)}\  & {\rm (Right\  TL \ relation)} &\gamma_{i}\gamma_{i-1}\gamma_{i} & \rightarrow & \gamma_{i} & \\
{\rm (4)}\  & {\rm (Inter-commutation)} &\gamma_{i}\overline{\gamma_{j}} & \rightarrow & \overline{\gamma_{j}} \gamma_{i}, \ {\rm for} \  |i-j|\geq 1  \\
{\rm (5)}\  & {\rm (Intra-commutation)} &\gamma_{i}\gamma_{j} &\rightarrow & \gamma_{j}\gamma_{i}, \ {\rm for} \  |i-j|\geq2
\end{array}
$$

\noindent
The rules are extended to $\Sigma_{n}^*$ as described in Section~\ref{sec:string}.

The rewriting rules are %that we chose were
inspired by Temperley-Lieb algebras, and they are also reflected in the reality of the diagrams of DNA origami, as shown in Figs.~\ref{rule1} and \ref{rules}.
%\todo{Wtite a justification for these rules with DNA, in the case of staple strands, we ignore circles and in case of scafold strand, we assume that these cases never appear. }:
Specifically, a pattern in the left of Fig.~\ref{rule1} (a) has a small staple circle, which is deemed to be simplified by the right side. Staple strands are holding the scaffold in a certain position (obtained to the right of the arrow) and the  cyclic staple only reinforces the structure. The small circle of a scaffold  in Fig.~\ref{rule1} (b) left cannot form
% is also considered not to be formed
in DNA origami, and therefore is %should be
simplified to the structure on the  right of the arrow. % figure.

%\vspace{-.5cm}

\begin{figure}[h!]
	\centering
	\begin{subfigure}[h]{.45\textwidth}
		\centering
		\includegraphics[scale=.12]{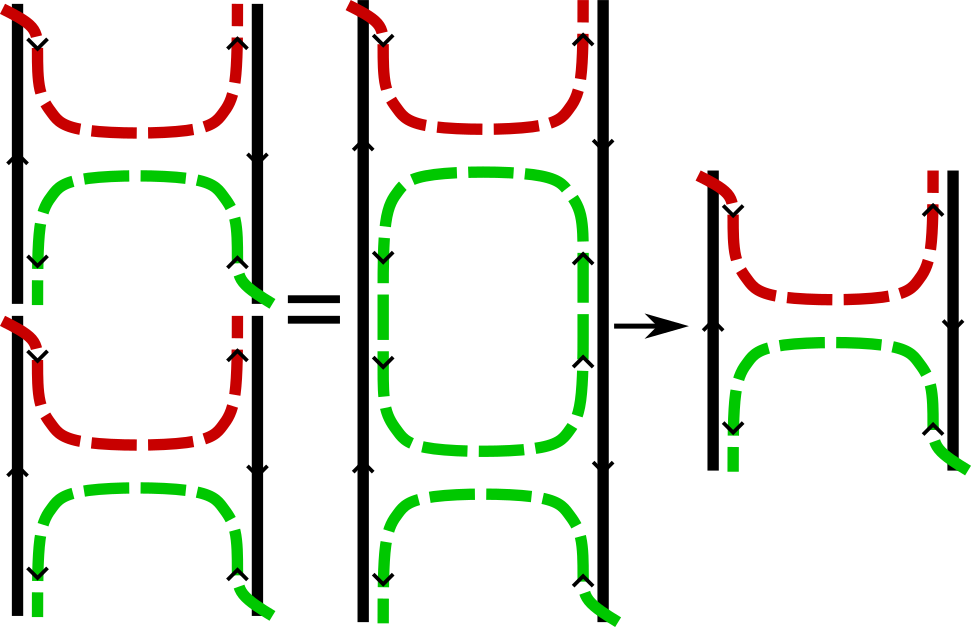}
		\caption{$\alpha_i\alpha_i$, $i$ odd}
		\label{combination1}
	\end{subfigure}%
	\begin{subfigure}[h]{.45\textwidth}
		\centering
		\includegraphics[scale=.12]{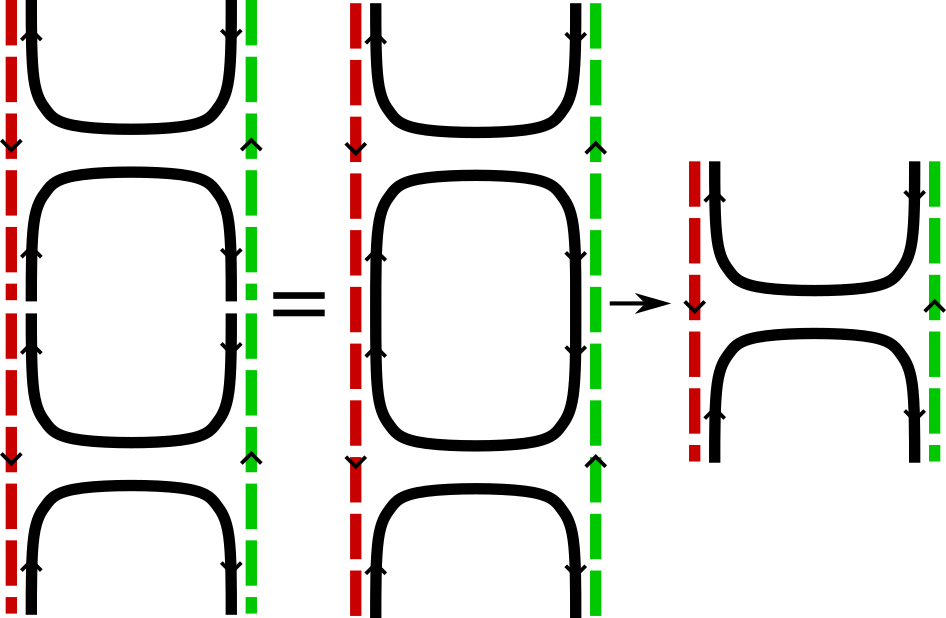}
		\caption{$\beta_i\beta_i$, $i$ odd}
		\label{combination3}
	\end{subfigure}
	\caption{Two examples of idempotency}
	\label{rule1}
\end{figure}

%\vspace{-1.2cm}

\begin{figure}[h!]
	\centering
	\begin{minipage}[h]{.21\textwidth}
		%\centering
		(A) \hskip 1mm\includegraphics[scale=.08]{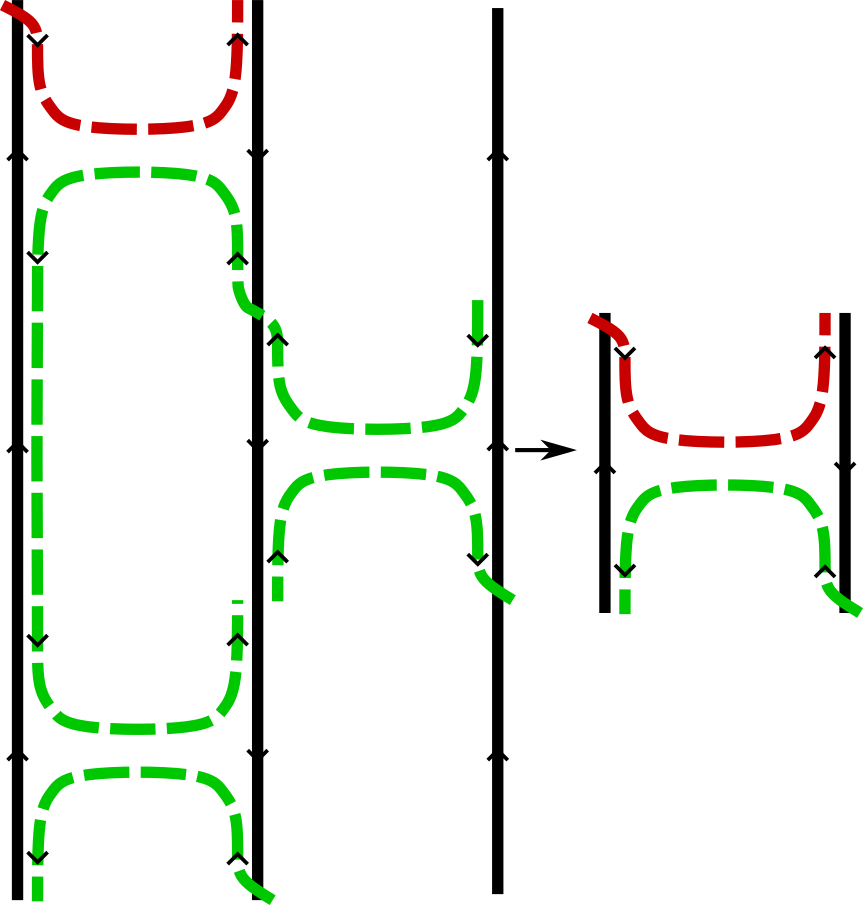}
		%\caption{One example of the 2nd rewriting rule}
		\label{rule2}
	\end{minipage}%
	\hskip 3mm
	\begin{minipage}[h]{.43\textwidth}
		(B) \hskip 1mm %\centering
		\includegraphics[scale=.1]{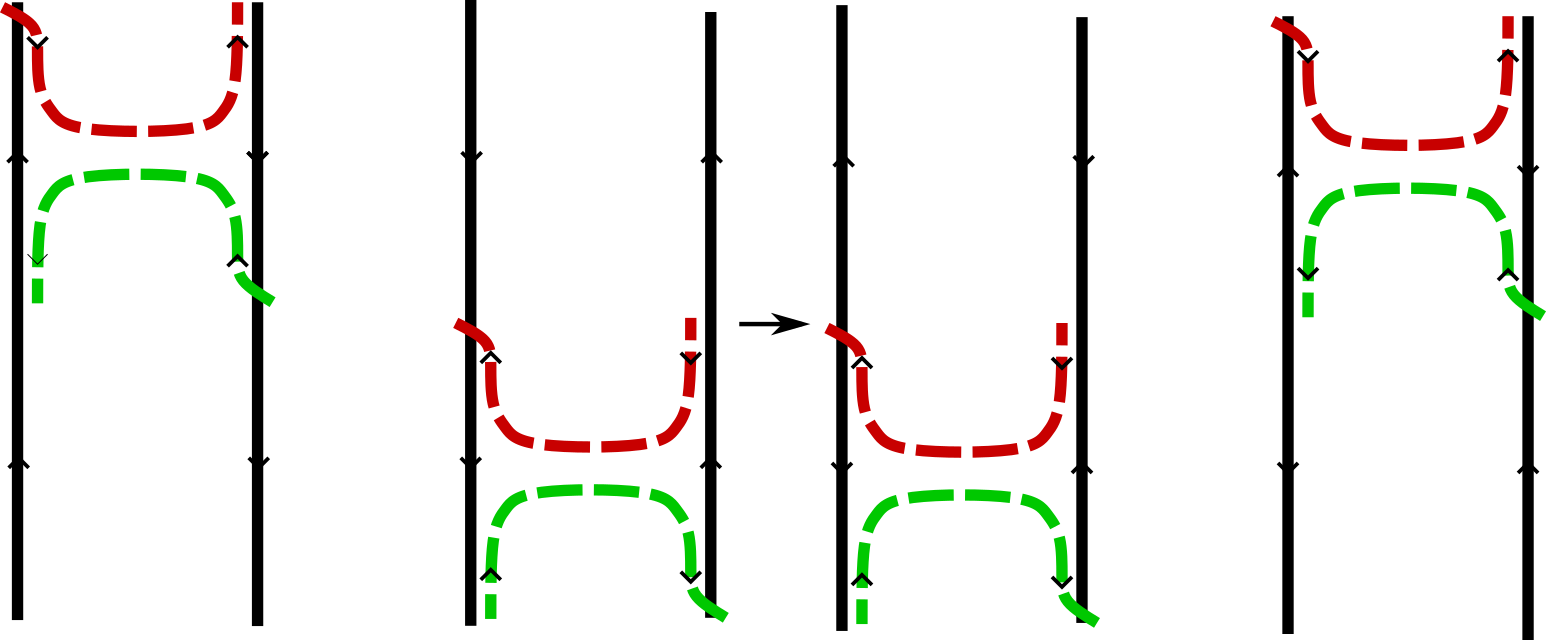}
		%\caption{One example of the 4th rewriting rule}
		\label{rule4}
		\hskip 1mm
	\end{minipage}%
	\begin{minipage}[h]{.33\textwidth}
		(C)\hskip 2mm\centering
		\includegraphics[scale=.1]{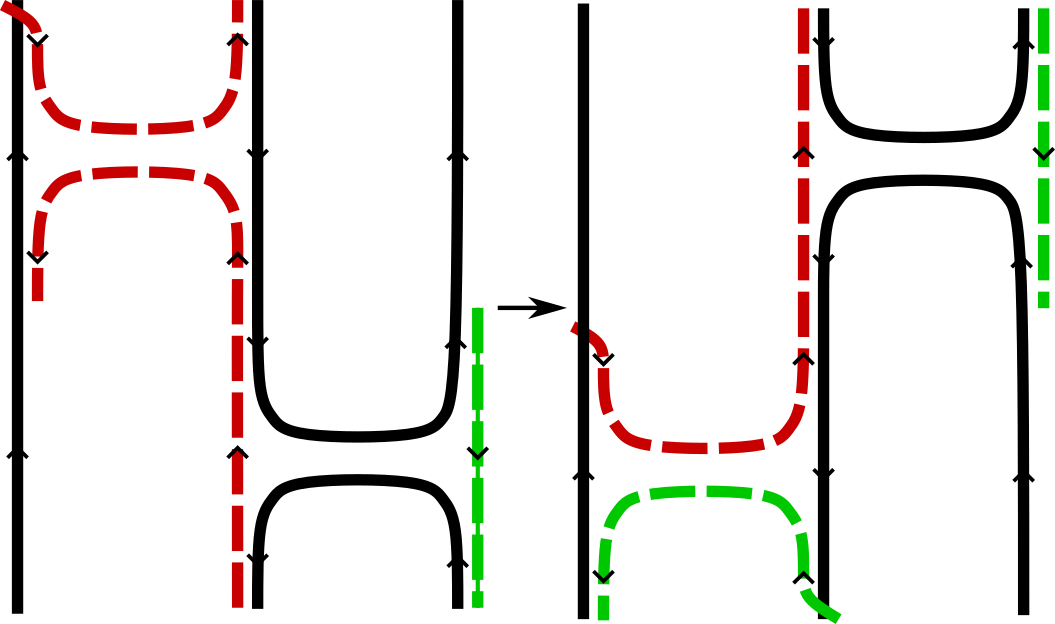}
		% \caption{One example of the 5th rewriting rule}
		\label{rule3}
	\end{minipage}
	\caption{Examples of (A) TL relation, (B) Inter-commutation, and (C) Intra-commutation}
	\label{rules}
\end{figure}

%\vspace{-1cm}

%%%%%%%%%%%%%%%%%%%%%%%%%%%%%%%%%%%%%
\subsubsection{Deriving additional rewriting rules by substitution.}\label{sec:rewriting2}
%%%%%%%%%%%%%%%%%%%%%%%%%%%%%%%%%%%%

%\todo{new relations from old, how do they appear}

%One interesting question  is if t
Since DNA origami structure has no internal scaffold loops, rewriting rules similar to $(1)$--$(5)$ can 
%By checking the diagrams of products of generators, it can be observed that the rewriting rules $(1)$--$(5)$ listed above %in Section \ref{sec:writing} can
be applied  to concatenations of generators, that is, products of $\alpha$'s and $\beta$'s. % as well.
%In this section 
We extend these rules to more %explore 
general %constructing such
substitution rules for %in
our specific case
of generators $\alpha_i$ and $\beta_i$
by considering other $\gamma$'s, for instance $\gamma=\alpha\beta$.
The composition diagrams show that %We can observe that 
such substitution rules describe the 
%are plausible for 
DNA origami staples/scaffold structure in the way we proposed above (see Fig.~\ref{substitution1}), while these
new structures produce rules that cannot be derived from the listed ones in $(1)$--$(5)$.
Therefore we
consider
% more general substitution rules.
 rewriting rules for  concatenations %$\gamma$
of generators  $\alpha$'s and $\beta$'s.
Furthermore, we focus on concatenations of generators with the same or `neighboring' indexes because only for these generators the ends of the staples can connect. However, $\a_i$ and $\bt_j$ ($i\not = j$) can swap their places (by the inter-communication rule $(4)$) and the factor  $\a_i\bt_{i+1}$, for example,
of a word in $\Sigma_n$ can be substituted with a factor $\bt_{i+1}\a_i$.
Further, we observe that by setting $\gamma\in \{\a_i\bt_i, \bt_i\a_i\}$, the idempotency rule $(1)$ holds
as seen in Fig.~\ref{substitution1}. Therefore there are only four cases to consider:
$\gamma\in \{\a_i\bt_i,\bt_i\a_i,\a_i\bt_i\a_i,\bt_i\a_i\bt_i\}$ and check the plausibility of corresponding  DNA diagrams.  %, and  consider t
%The only new rewriting rules considered are those that 
%reflect the plausibility of the
%DNA origami diagrams.

%We will substitute all words of 2 strings into our rewriting rules, which are enumerated in Lemma \ref{lemma:n=2}.
First,  consider $\gamma \in \{\alpha\beta,\beta\alpha\}$, where $\gamma_i$ indicates  $\alpha_i\beta_i$. Then substituting $\gamma$ into rewriting rules (1), (2), and (3) gives us new rewriting rules (1a), (2a), and (3a). For example, (1a) consists of $\alpha_i\beta_i\alpha_i\beta_i \rightarrow \alpha_i\beta_i$ and $\beta_i\alpha_i\beta_i\alpha_i \rightarrow \beta_i\alpha_i$.
%, then $\gamma_{1}\alpha_{1}\beta_{1}$. Then our new rewriting rules would be as follows:
Note that a provisional rewriting rule (5a) could easily be obtained by the rewriting rule (5), so we do not
consider it as a new rule. We also do not add rewriting rule (4a) since it conflicts with the structure of the scaffold,
%reality of the diagram,
as shown in Fig.~\ref{wrong_rule}. Notice that the scaffold strand at the top left is connected to the second strand only on the left side of the figure, and on the right hand side of the figure it is connected from the strand three. Next we consider $\gamma \in \{\alpha\beta\alpha,\beta\alpha\beta\}$, which gives us rewriting rules (1b), (2b) and (3b). Similarly as before, rules %For similar reasons as before, we do not add
(4b) or (5b) are not added, (4b) because of incompatible staple strands, and (5b) because it can be derived from (5). In addition, (1b) can also be derived from (1) and (1a), so it is not considered as a new rule.
%we do not  consider it as a new rule. 
In the end, we are left with 10 unique rewriting rules which we use to define the general rewriting rules and the monoids.

% $$
% \begin{array}{lrlllrll}
% (1a)  \ &
% \alpha_i\beta_i\alpha_i\beta_i &
% \rightarrow &
% \alpha_i\beta_i &
% \  \ {\rm and } \ \  &
% \beta_i\alpha_i\beta_i\alpha_i &
% \rightarrow &
% \beta_i\alpha_i , \\
% (2a) \ & \alpha_i\beta_i\alpha_{i+1}\beta_{i+1}\alpha_i\beta_i & \rightarrow & \alpha_i\beta_i & \ \ {\rm and }\  \ & \beta_i\alpha_i\beta_{i+1}\alpha_{i+1}\beta_i\alpha_i & \rightarrow & \beta_i\alpha_i , \\
% (3a)\ & \alpha_i\beta_i\alpha_{i-1}\beta_{i-1}\alpha_i\beta_i & \rightarrow & \alpha_i\beta_i & \ \  {\rm and } \ \ & \beta_i\alpha_i\beta_{i-1}\alpha_{i-1}\beta_i\alpha_i & \rightarrow & \beta_i\alpha_i .
% \end{array}
% $$

\begin{figure}
     \begin{minipage}[h]{.55\textwidth}
        \centering
	\begin{minipage}[h]{.45\textwidth}
		\centering
		\includegraphics[scale=.10]{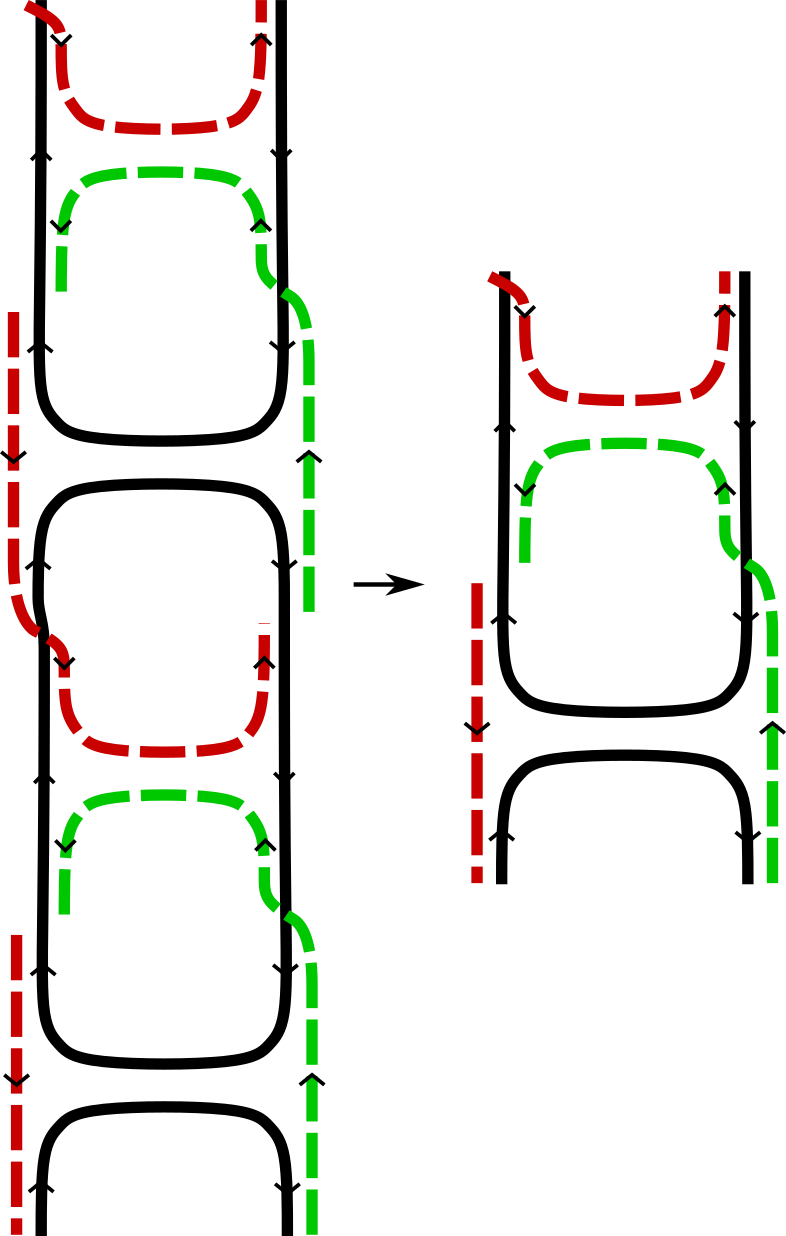}
		%\caption{$\alpha_i\beta_i\alpha_i\beta_i \rightarrow \alpha_i\beta_i$}
		\label{abab}
	\end{minipage}%
	\begin{minipage}[h]{.45\textwidth}
		\centering
		\includegraphics[scale=.10]{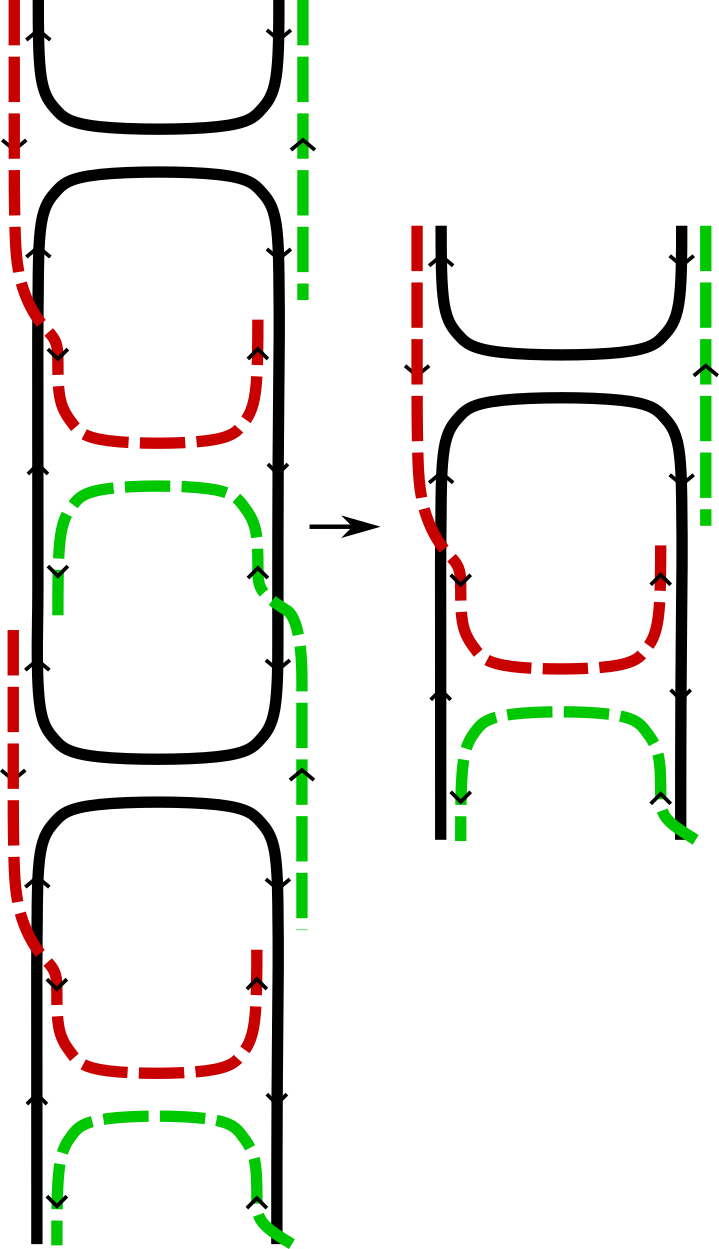}
		%\caption{$\beta_i\alpha_i\beta_i\alpha_i \rightarrow \beta_i\alpha_i$}
		\label{baba}
	\end{minipage}
	\captionsetup{justification=centering}
	\caption{Substitution of $\alpha\beta$  and $\beta\alpha$ (resp.) into the first rewriting rule ($i$ odd)}
	\label{substitution1}
    \end{minipage}%
    \begin{minipage}[h]{.35\textwidth}
        \centering
	\captionsetup{justification=centering}
	\includegraphics[scale=.10]{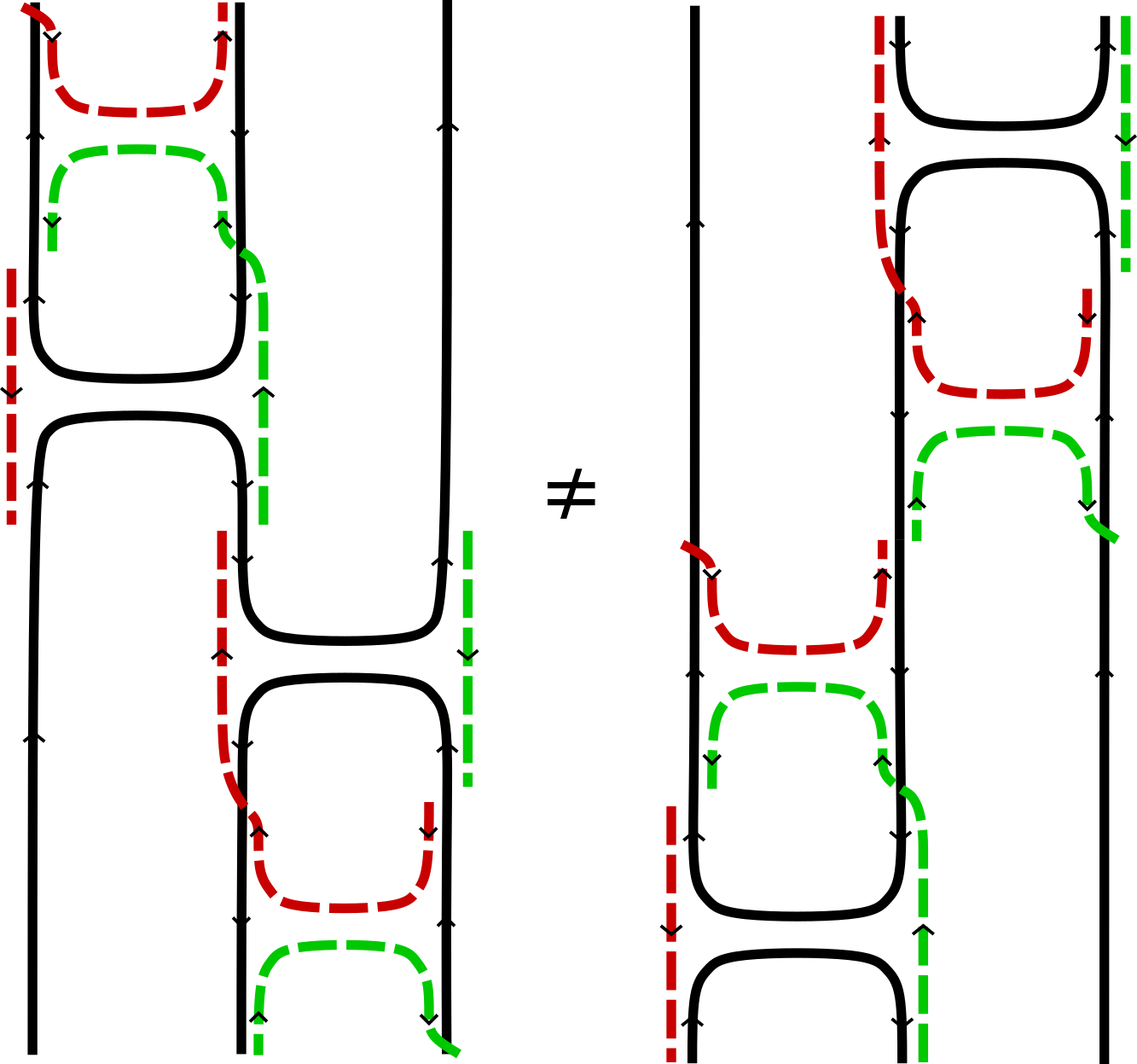}
	\caption{Substitution of $\gamma=\alpha\beta$ into rewriting rule (4) for $i$ odd
		%. Notice the fundamental difference of the scaffold structure of the two diagrams that prevents us from adding this as a reasonable rule.
	}
	\label{wrong_rule}
    \end{minipage}
\end{figure}

% $$
% \begin{array}{lrlllrll}
% (1b) \ & \alpha_i\beta_i\alpha_i\alpha_i\beta_i\alpha_i & \rightarrow & \alpha_i\beta_i\alpha_i & \ \ {\rm and} \ \ &
% \beta_i\alpha_i\beta_i\beta_i\alpha_i\beta_i & \rightarrow & \beta_i\alpha_i\beta_i, \\
% (2b)\ & \alpha_i\beta_i\alpha_i\alpha_{i+1}\beta_{i+1}\alpha_{i+1}\alpha_i\beta_i\alpha_i & \rightarrow & \alpha_i\beta_i\alpha_i & \  \ {\rm and }\ \ & \beta_i\alpha_i\beta_i\beta_{i+1}\alpha_{i+1}\beta_{i+1}\beta_i\alpha_i\beta_i & \rightarrow & \beta_i\alpha_i\beta_i, \\
% (3b)\ & \alpha_i\beta_i\alpha_i\alpha_{i-1}\beta_{i-1}\alpha_{i-1}\alpha_i\beta_i\alpha_i &\rightarrow & \alpha_i\beta_i\alpha_i & \  \ {\rm and} \ \  & \beta_i\alpha_i\beta_i\beta_{i-1}\alpha_{i-1}\beta_{i-1}\beta_i\alpha_i\beta_i &\rightarrow &  \beta_i\alpha_i\beta_i .
% \end{array}
% $$

%\begin{definition}\label{origami-monoid}
%	{\rm
%		For a positive integer $n$, the \emph{origami rewriting system} $(\Sigma_n, R)$ is defined on words over alphabet $\Sigma_n=\{\alpha_i, \beta_i\,\mid \, i=1, \ldots, n-1\}$, and rewriting rules
%		(1) through (5), (1a), (2a), (3a), (2b), (3b).
%		The \emph {origami monoid}  ${\mathcal O}_n$  is the monoid with generators $\alpha_i, \beta_i$ for $i=1,\ldots,n-1$ and relations generated by
%		the rewriting rules $R$. % is also denoted by ${\mathcal O}_n$.
%	}
%\end{definition}

\begin{definition}\label{origami-monoid}
	{\rm
		The \emph {origami monoid}  ${\mathcal O}_n$  is the monoid with a set of generators $\Sigma_n$  and relations generated by
		the rewriting rules (1) through (5), (1a), (2a), (3a), (2b), (3b).
	}
\end{definition}

%In summary, in this section we derived this definition from plausible  simplifications of DNA configurations.
%In this attempt, we observed that substituting short sequences consisting  of both $\alpha$ and $\beta$ into the original rewriting rules
%give rise to plausible DNA simplifications. This observation leads to the following mathematical procedure of 
%constructing new monoids from old:
%(1) consider different types of letters ($\alpha$ and $\beta$, in our case) corresponding to the original generators ($h_i$), 
%(2) in addition to the original rules for each type, consider substitution rules ($\alpha_i \beta_i$, for example).
%This process will define a larger monoid that is analogous to the original, and suggest a variation of monoid extensions by presentations.

%%%%%%%%%%%%%%%%%%%%%%%%%%%%%%%%
%\section{Reducing words for $n=$2, 3}\label{sec:combinations}
%%%%%%%%%%%%%%%%%%%%%%%%%%%%%%%%
\section{Monoid structures of ${\mathcal O}_n$ }
%for $n \leq 6$}
\label{sec:monoid2}

%\todo{ move this into the section below?}  \toedit{Merged}

In this section, we present computational results on Green's $\mathscr{D}$-classes and compare them to 
those for the Jones monoids obtained in \cite{IdemP}. For comparison, we use the monoid epimorphism 
from ${\cal O}_n$ to the product ${\cal J}_n \times {\cal J}_n$ defined below. 

Let ${\mathcal J}_n$ be the Jones monoid of degree $n$ with generators $h_i$, $i=1, \ldots, n-1$.
We  denote the submonoid of $\mathcal{O}_n$ generated by $\alpha$s
(resp. $\beta$s),
by $\mathcal{O}^\alpha_n$ (resp. $\mathcal{O}^\beta_n$). An equivalent description for $\mathcal{O}^\alpha_n$ is the set of all words consisting of only $\alpha$s (plus the empty word), and similarly for $\mathcal{O}^\beta_n$. Let $\mathcal{O}^{\alpha \beta}_n= [\mathcal{O}_n \setminus (\mathcal{O}^\alpha_n \cup \mathcal{O}^\beta_n)] \cup \{1\}$.

\begin{lemma}\label{sub}
$\mathcal{O}^{\alpha \beta}_n$ is a submonoid of $\mathcal{O}_n$.
\end{lemma}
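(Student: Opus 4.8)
The plan is to exhibit a monoid homomorphism out of $\mathcal{O}_n$ whose fibres separate the pure-$\alpha$, pure-$\beta$, and genuinely mixed elements, and then to recognise $\mathcal{O}^{\alpha \beta}_n$ as the preimage of a submonoid. Concretely, let $T=(\mathcal{P}(\{\alpha,\beta\}),\cup)$ be the four-element commutative monoid whose identity is $\emptyset$, and define $\psi \colon \Sigma_n^* \to T$ on generators by $\psi(\alpha_i)=\{\alpha\}$ and $\psi(\beta_i)=\{\beta\}$ for all $i$, extended multiplicatively. Intuitively, $\psi(w)$ records exactly which of the two generator families occur in $w$.

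First I would check that $\psi$ descends to a homomorphism on $\mathcal{O}_n$, that is, that $\psi(u)=\psi(v)$ for every defining relation $u\to v$ among (1)--(5), (1a), (2a), (3a), (2b), (3b). This is immediate because $T$ is commutative and idempotent: every relation carries the same set of letter-types on both sides. Rules (1)--(3) only delete repeated occurrences of a single $\gamma$, so both sides have type-set $\{\gamma\}$; the commutation rules (4)--(5) merely permute letters and so preserve the set of types present; and each substitution rule (1a)--(3b) has type-set $\{\alpha,\beta\}$ on both sides, for instance $\psi(\alpha_i\beta_i\alpha_i\beta_i)=\{\alpha,\beta\}=\psi(\alpha_i\beta_i)$. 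Hence $\psi$ is well defined on the monoid.

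Next I would compute the fibres. Since any nonempty word contains at least one letter, $\psi(x)=\emptyset$ forces $x=1$, so $\psi^{-1}(\emptyset)=\{1\}$. If $\psi(x)=\{\alpha\}$, then every representative word of $x$ uses only $\alpha$'s, so $x\in\mathcal{O}^\alpha_n$; conversely a nonidentity element of $\mathcal{O}^\alpha_n$ has a nonempty pure-$\alpha$ representative and maps to $\{\alpha\}$. Thus $\mathcal{O}^\alpha_n=\psi^{-1}(\{\emptyset,\{\alpha\}\})$ and symmetrically $\mathcal{O}^\beta_n=\psi^{-1}(\{\emptyset,\{\beta\}\})$. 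Consequently $\mathcal{O}^\alpha_n\cup\mathcal{O}^\beta_n=\psi^{-1}(\{\emptyset,\{\alpha\},\{\beta\}\})$, whence $\mathcal{O}_n\setminus(\mathcal{O}^\alpha_n\cup\mathcal{O}^\beta_n)=\psi^{-1}(\{\alpha,\beta\})$, and therefore $\mathcal{O}^{\alpha \beta}_n=\psi^{-1}(\{\alpha,\beta\})\cup\{1\}=\psi^{-1}\big(\{\emptyset,\{\alpha,\beta\}\}\big)$.

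Finally, $S=\{\emptyset,\{\alpha,\beta\}\}$ is a submonoid of $T$, since it contains the identity $\emptyset$ and is closed under $\cup$ (as $\{\alpha,\beta\}\cup\{\alpha,\beta\}=\{\alpha,\beta\}$), so its preimage under the homomorphism $\psi$ is a submonoid of $\mathcal{O}_n$, proving the lemma. Equivalently one argues closure directly: if $x,y$ are both mixed then $\psi(xy)=\{\alpha,\beta\}\cup\{\alpha,\beta\}=\{\alpha,\beta\}$ is again mixed, and multiplication by $1$ is harmless. I expect the only real content to be the verification that $\psi$ respects the relations; the apparent danger, namely that rewriting could cancel all occurrences of one generator family and turn a mixed product into a pure word, is exactly what the homomorphism rules out, because no relation changes the set of types present. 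The remainder is bookkeeping over the four fibres.
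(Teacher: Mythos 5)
Your proof is correct and rests on exactly the same observation as the paper's: the set of generator families occurring in a word is invariant under all the rewriting rules, so the product of two mixed elements stays mixed. You merely package this as an explicit homomorphism to $(\mathcal{P}(\{\alpha,\beta\}),\cup)$ and take a preimage, which is a cleaner formalization of the paper's two-sentence argument but not a different route.
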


\begin{proof}
   The left and right hand sides of each rewriting rule show that rewriting a word by these rules does not change the absence, or existence of at least one $\alpha$ in the word, and similarly for $\beta$. Thus  multiplication of two words in $\mathcal{O}^{\alpha \beta}_n$ does not remove $\alpha$'s or $\beta$'s from the product, hence the product  remains in $\mathcal{O}^{\alpha \beta}_n$. 
\end{proof}

Let $p_\alpha : {\mathcal O}_n \rightarrow {\mathcal J}_n$
be the epimorphism defined by `projections' $p_\alpha (\alpha_i) = h_i$ and $p_\alpha (\beta_i) =1 $, % \epsilon$,
for all $i=1, \ldots, n-1$, and let $p_\beta$ be defined similarly for $\beta$s. Define
$p: {\mathcal O}_n \rightarrow {\mathcal J}_n \times {\mathcal J}_n$
by
$ p(x) = (p_\alpha (x), p_\beta (x) ) $ for $x \in {\mathcal O}_n$.
Since the monoid relations of ${\mathcal O}_n$ hold under $p$, we have the following.

\begin{lemma}\label{iso}
    $\mathcal{O}_n^{\alpha} \cong \mathcal{O}_n^{\beta} \cong \mathcal{J}_n$.
\end{lemma}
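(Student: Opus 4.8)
The plan is to establish the isomorphism $\mathcal{O}_n^\alpha \cong \mathcal{J}_n$ (the $\beta$ case being identical by the symmetry of the bar operation), by showing that the projection $p_\alpha$ restricts to an isomorphism from $\mathcal{O}_n^\alpha$ onto $\mathcal{J}_n$. Since $\mathcal{O}_n^\alpha$ consists of exactly those words built from the letters $\alpha_1, \dots, \alpha_{n-1}$, the map $p_\alpha$ sends each $\alpha_i$ to $h_i$ and is clearly a surjective monoid homomorphism onto $\mathcal{J}_n$. The substance of the proof is injectivity, which amounts to showing that the relations $\mathcal{O}_n$ imposes on $\alpha$-only words are \emph{exactly} the Jones relations — no more, no fewer.

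First I would observe that on purely-$\alpha$ words, the only rewriting rules from Definition~\ref{origami-monoid} that can ever apply are those whose left-hand sides contain only $\alpha$'s. Scanning rules (1)--(5): rule (1) gives $\alpha_i\alpha_i \to \alpha_i$, rule (2) gives $\alpha_i\alpha_{i+1}\alpha_i \to \alpha_i$, rule (3) gives $\alpha_i\alpha_{i-1}\alpha_i \to \alpha_i$, and rule (5) gives $\alpha_i\alpha_j \to \alpha_j\alpha_i$ for $|i-j|\geq 2$. Rule (4) (inter-commutation) mixes $\alpha$ and $\beta$ and so never fires on an all-$\alpha$ word, and the derived rules (1a), (2a), (3a), (2b), (3b) all involve $\beta$'s on both sides, hence are likewise inapplicable. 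Thus the relations $\mathcal{O}_n$ induces on $\mathcal{O}_n^\alpha$ are generated precisely by idempotency, the two TL relations $\alpha_i\alpha_{i\pm1}\alpha_i \to \alpha_i$, and far commutation — which are term-for-term the defining relations (B), (C), (D) of $\mathcal{J}_n$ under $\alpha_i \leftrightarrow h_i$.

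With this identification of the relation sets in hand, injectivity of $p_\alpha$ is a formal matter: two $\alpha$-words are equal in $\mathcal{O}_n^\alpha$ if and only if one rewrites to the other using only the applicable rules listed above, which happens if and only if their $h$-images are equal in $\mathcal{J}_n$, since those rules correspond bijectively to the Jones relations. The key step requiring care is the claim that applying an $\mathcal{O}_n$-relation to an element of $\mathcal{O}_n^\alpha$ cannot leave the submonoid — but this is already guaranteed by the observation underlying Lemma~\ref{sub}, namely that rewriting preserves the presence/absence of $\alpha$'s and of $\beta$'s, so an all-$\alpha$ word stays all-$\alpha$, confirming that no $\beta$-involving rule can be invoked even transiently.

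I expect the main obstacle to be making the ``no more, no fewer relations'' argument fully rigorous rather than merely plausible: one must be sure that the monoid presentation of $\mathcal{O}_n$ does not force, through some chain of rewrites passing through words outside $\mathcal{O}_n^\alpha$, an extra coincidence between two $\alpha$-words that $\mathcal{J}_n$ does not have. The cleanest way to foreclose this is to use the epimorphism $p_\alpha$ directly: since every defining relation of $\mathcal{J}_n$ lifts to a relation that holds in $\mathcal{O}_n$ among the $\alpha_i$, the assignment $h_i \mapsto \alpha_i$ descends to a well-defined homomorphism $\mathcal{J}_n \to \mathcal{O}_n^\alpha$, and this is a two-sided inverse to $p_\alpha|_{\mathcal{O}_n^\alpha}$. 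Exhibiting that inverse is what converts the informal relation-count into a genuine isomorphism, and is the step I would write out most carefully.
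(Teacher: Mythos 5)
Your proof is correct and follows the route the paper itself intends: the paper offers no explicit proof of this lemma, relying only on the preceding remark that the relations of $\mathcal{O}_n$ are respected by the projection $p_\alpha$. Your addition of the section $h_i \mapsto \alpha_i$ --- well-defined because the $\alpha$-instances of rules (1), (2), (3), (5) are exactly the Jones relations --- as a two-sided inverse to $p_\alpha|_{\mathcal{O}_n^\alpha}$ is precisely the detail needed to make injectivity rigorous, and it correctly forecloses the worry about rewrite chains between two all-$\alpha$ words passing through words outside $\mathcal{O}_n^\alpha$.
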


\begin{lemma}
	The map $p: {\mathcal O}_n \rightarrow {\mathcal J}_n \times {\mathcal J}_n$ is a surjective monoid morphism.
\end{lemma}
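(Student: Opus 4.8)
The plan is to establish two things: that $p$ is a well-defined monoid homomorphism, and that it is surjective. Since $p = (p_\alpha, p_\beta)$ maps into a direct product, it is a homomorphism precisely when each component $p_\alpha, p_\beta : \mathcal{O}_n \to \mathcal{J}_n$ is one, and each component is determined by its values on the generators $\Sigma_n$. By the universal property of a monoid presentation, $p_\alpha$ (and symmetrically $p_\beta$) descends from the free monoid on $\Sigma_n$ to $\mathcal{O}_n$ as soon as both sides of every defining relation of $\mathcal{O}_n$ have equal images in $\mathcal{J}_n$. So the first step is to verify this relation by relation, which is exactly what the sentence preceding the lemma asserts.

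First I would record the two governing features of the projections: under $p_\alpha$ every $\beta_i$ collapses to $1$ while $\alpha_i \mapsto h_i$, and under $p_\beta$ the roles are reversed. With this in hand, each defining relation projects to a tautology or to a Jones-monoid relation. The idempotency rules $(1)$ and $(1a)$ land on $h_i h_i = h_i$, i.e.\ relation $(C)$ (or on $1 = 1$ under the opposite projection); the two-sided Temperley--Lieb rules $(2)$, $(3)$, $(2b)$, $(3b)$ land on $h_i h_{i\pm 1} h_i = h_i$, i.e.\ relation $(B)$; the intra-commutation rule $(5)$ lands on $h_i h_j = h_j h_i$ with $|i-j| \geq 2$, i.e.\ relation $(D)$; and the inter-commutation rule $(4)$, involving one $\alpha$ and one $\beta$, becomes trivial under each projection because exactly one of the two letters survives, leaving the same one-letter word on both sides. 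The mildly tedious part is carrying the substituted generators $\gamma \in \{\alpha\beta, \beta\alpha, \alpha\beta\alpha, \beta\alpha\beta\}$ through $(1a)$, $(2a)$, $(3a)$, $(2b)$, $(3b)$, but in each case the interleaved letters of the other type vanish under a given projection and what remains is again an instance of $(B)$ or $(C)$. This confirms $p$ is a homomorphism, and $p(1) = (1,1)$ is immediate.

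For surjectivity I would argue directly. Fix an arbitrary $(x,y) \in \mathcal{J}_n \times \mathcal{J}_n$. Since $\mathcal{J}_n$ is generated by $h_1, \dots, h_{n-1}$, write $x = h_{i_1} \cdots h_{i_k}$ and $y = h_{j_1} \cdots h_{j_m}$ as words in these generators. Form the element $z = \alpha_{i_1} \cdots \alpha_{i_k}\, \beta_{j_1} \cdots \beta_{j_m} \in \mathcal{O}_n$, obtained by rewriting the first word in $\alpha$'s, the second in $\beta$'s, and concatenating. Because $p_\alpha$ kills every $\beta$ and $p_\beta$ kills every $\alpha$, the two blocks decouple, giving $p_\alpha(z) = h_{i_1} \cdots h_{i_k} = x$ and $p_\beta(z) = h_{j_1} \cdots h_{j_m} = y$, hence $p(z) = (x,y)$. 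Thus every element of the product is attained.

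I do not expect a genuine obstacle here; the content is essentially bookkeeping. The part needing the most care is the relation-checking of the first step, and in particular confirming that the derived rules $(1a)$, $(2a)$, $(3a)$, $(2b)$, $(3b)$ remain compatible with \emph{both} projections rather than only one. The surjectivity step is the cleanest: its whole point is that the two projections separate the $\alpha$- and $\beta$-generators, so a pre-image can be assembled blockwise from pre-images in each Jones factor.
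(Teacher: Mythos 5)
Your proof is correct and follows the same route the paper takes: the paper leaves the argument as the one-line remark that the monoid relations of $\mathcal{O}_n$ hold under $p$, which is exactly the relation-by-relation check you carry out, and surjectivity follows as you say by lifting a pair of words in the $h_i$ to an $\alpha$-block followed by a $\beta$-block. Your write-up simply supplies the details the paper omits.
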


In particular, it follows that the order of ${\cal O}_n$ is at least $|{\cal J}_n|^2$. 

%We used {\it GAP} to determine $D$-classes of origami monoids ${\cal O}_n$ for $n \leq 6$.
%The \textit{MakeConfluent} command in GAP applied to the set of rewriting rules
%$(\Sigma_n, R)$ derived %and obtained
%a finite (larger) set of  rewriting rules.
%Because the rewriting rules are relations in the origami monoid, we can consider $u\rightarrow v$ to be an equality in $\mathcal O_n$. In this case the new  rules derived by GAP are not new relations in $\mathcal O_n$.

\subsection{Orders of origami monoids}

For $n=2$ we can determine the order of ${\cal O}_2$ as follows.

\begin{lemma}\label{lemma:n=2}
	Every non-empty word of ${\mathcal O}_{2}$ can be reduced by rewriting rules to one of
	the following normal forms:
	$\alpha_{1},\
	\beta_{1},\
	\alpha_{1}\beta_{1},\
	\beta_{1}\alpha_{1},\
	\alpha_{1}\beta_{1}\alpha_{1},\ {\rm  or}\
	\beta_{1}\alpha_{1}\beta_{1}. $
\end{lemma}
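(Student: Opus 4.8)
The plan is to exploit the fact that, when $n=2$, the alphabet $\Sigma_2$ consists of only the two letters $\alpha_1$ and $\beta_1$, which drastically limits which of the ten defining rules can fire. First I would go through the rules one at a time: the two Temperley–Lieb relations (2), (3) and their substitution variants (2a), (3a), (2b), (3b) all involve an index $i\pm 1$, while the inter- and intra-commutation rules (4), (5) require $|i-j|\geq 1$ and $|i-j|\geq 2$ respectively. Since $i=1$ is the only index available for $n=2$, none of these can ever apply. The only rules that survive are the idempotency rule (1), which gives $\alpha_1\alpha_1 \to \alpha_1$ and $\beta_1\beta_1 \to \beta_1$, and the substitution rule (1a), which gives $\alpha_1\beta_1\alpha_1\beta_1 \to \alpha_1\beta_1$ and $\beta_1\alpha_1\beta_1\alpha_1 \to \beta_1\alpha_1$.

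Next I would observe that each of these four surviving rules strictly decreases word length (by $1$ for rule (1) and by $2$ for rule (1a)). Because length is a nonnegative integer, any chain $w \rightarrow_* \cdots$ must terminate, halting at an \emph{irreducible} word, i.e. a word containing none of the left-hand-side factors. It therefore suffices to identify every irreducible non-empty word and confirm that the list of such words is exactly the six forms in the statement; termination then guarantees that each non-empty word reduces to one of them.

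To characterize the irreducible words I would argue combinatorially. A word is irreducible precisely when it contains none of $\alpha_1\alpha_1$, $\beta_1\beta_1$, $\alpha_1\beta_1\alpha_1\beta_1$, $\beta_1\alpha_1\beta_1\alpha_1$ as a factor. Forbidding $\alpha_1\alpha_1$ and $\beta_1\beta_1$ forces the word to alternate between $\alpha_1$ and $\beta_1$. But any alternating word of length at least $4$ has as its length-four prefix either $\alpha_1\beta_1\alpha_1\beta_1$ (if it starts with $\alpha_1$) or $\beta_1\alpha_1\beta_1\alpha_1$ (if it starts with $\beta_1$), and is hence reducible. Thus every irreducible non-empty word is an alternating word of length at most $3$; enumerating these gives precisely $\alpha_1$, $\beta_1$, $\alpha_1\beta_1$, $\beta_1\alpha_1$, $\alpha_1\beta_1\alpha_1$, $\beta_1\alpha_1\beta_1$. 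Since each of these six is itself irreducible, they form the complete set of irreducible non-empty words, and the lemma follows.

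The only genuinely tedious part is the first step, namely the rule-by-rule verification that no index-shifting rule can fire when $n=2$; once that bookkeeping is done, the remaining argument is the short termination-plus-enumeration observation on alternating words, so I do not anticipate any real obstacle.
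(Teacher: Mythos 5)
Your proof is correct and follows essentially the same route as the paper's: reduce repeated letters to force an alternating word, then use the idempotency of $\alpha_1\beta_1$ and $\beta_1\alpha_1$ (rule (1a)) to cap the length at three and enumerate. Your version is slightly more explicit about which rules can fire for $n=2$ and about termination, but the underlying argument is identical.
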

\begin{proof}
	%	First, consider the words of length $k=3$ or less.
	Since $\Sigma_{2}=\{\alpha_{1},\beta_{1}\}$, we list the words of length $3$ or less  exhaustively. After applying  rewriting rules to these words, they reduce to those words listed in the statement.
	
	Now consider a word $w$ with length greater than 3. We show that $w$ can be reduced to a word with length 3 or less.
	If $\alpha_{1}\alpha_{1}$ or $\beta_{1}\beta_{1}$ are factors of $w$, we
	reduce them to $\alpha_{1}$ or $\beta_{1}$, respectively.
	Repeating this process, we may assume that
	$w$ is an alternating sequence of $\alpha_{1}$ and $\beta_{1}$.
	Since $\alpha_{1}\beta_{1}$ and  $\beta_{1}\alpha_{1}$ are idempotent,
	$w$ reduces to a word of length less than 4.
\end{proof}

It is known that the  elements of the Jones monoid ${\mathcal J}_n$ of degree $n$ are in bijection with the linear chord diagrams obtained from the arcs of the diagrams representing them, and the total number  of such chord diagrams is equal to  the Catalan number
$\displaystyle C_n=\frac{1}{n+1}
\left( \begin{array}{cc} 2n \\ n \end{array} \right)$~\cite{BDP}.
Thus the numbers of  elements of ${\cal J}_n$ for $n=2, \ldots, 6$ are 
2, 5, 14, 42, 429, respectively.
GAP computations show that 
the number of non-identity elements in  ${\mathcal O}_3$, ${\mathcal O}_4$, ${\mathcal O}_5$ and $\mathcal O_6$ are
44, 293, 2179, 19086 respectively. This sequence of integers is not listed in the OEIS~\cite{OEIS} list of sequences. 
% These numbers indicate the distinct DNA origami structures with at most 2,3,4,5, or 6 scaffold strand folds.
We observe that the orders of origami monoids are much larger. In fact it is not apparent from the definition whether they are all finite.
Thus we conjecture the following.

\begin{conjecture}\label{conj:finite}
	{\rm
		The order of ${\cal O}_n$ is finite for all $n$.
	}
\end{conjecture}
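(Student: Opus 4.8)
The plan is to prove finiteness by bounding the length of a shortest representative word for each element of ${\mathcal O}_n$: once every element is known to admit a representative of length at most some $f(n)$, finiteness is immediate because $\Sigma_n$ is finite and there are at most $\sum_{k=0}^{f(n)}|\Sigma_n|^{k}$ such words. The guiding structural observation is that the $\alpha$- and $\beta$-generators are almost independent. Rule (4) gives $\alpha_i\beta_j=\beta_j\alpha_i$ whenever $i\neq j$ (for integer indices $|i-j|\ge 1$ means exactly $i\neq j$), so the only opposite-type generators that fail to commute are the coincident pairs $\alpha_i,\beta_i$. Were these also to commute, every word could be sorted into an all-$\alpha$ block followed by an all-$\beta$ block; then by Lemma~\ref{iso} each block would generate a copy of ${\mathcal J}_n$, and ${\mathcal O}_n$ would coincide with the finite monoid ${\mathcal J}_n\times{\mathcal J}_n$. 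Since ${\mathcal J}_n\times{\mathcal J}_n$ is finite, finiteness of ${\mathcal O}_n$ is equivalent to finiteness of all fibers of $p$, and these fibers are nontrivial \emph{only} because of the same-index interaction encoded by rules (1a), (2a), (3a), (2b), (3b).

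First I would split the rules into the length-preserving commutations (4),(5) and the remaining rules (1),(2),(3),(1a),(2a),(3a),(2b),(3b), and check directly that each of the latter strictly decreases word length (the reductions are $2\to1$, $3\to1$, $4\to2$, $6\to2$, and $9\to3$). Consequently the length-reducing subsystem, taken modulo the congruence $\sim$ generated by the commutations, is terminating: any maximal reduction sequence performs at most $|w|$ length-reducing steps. It therefore suffices to show that a word that is \emph{reduced} — meaning no length-reducing rule applies to any word in its $\sim$-class — has length bounded in terms of $n$. Here one would invoke finiteness of the Jones monoid: the all-$\alpha$ and all-$\beta$ subwords each project into ${\mathcal J}_n$, whose reduced words have bounded length (they are counted by $C_n$), while Lemma~\ref{lemma:n=2} shows the submonoid generated by a single coincident pair $\{\alpha_i,\beta_i\}$ has only six nonidentity elements, so the local interaction at each index is uniformly bounded.

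The heart of the argument, and the step I expect to be the main obstacle, is establishing the length bound for reduced words — equivalently, showing that one cannot build arbitrarily long words in which no reducible factor can be exposed by commuting letters together. The natural first move is a pigeonhole argument (a long word must repeat some $\gamma_i$ often enough to force a factor $\gamma_i\gamma_i$, or a Temperley–Lieb factor $\gamma_i\gamma_{i\pm1}\gamma_i$, after commutation), but it can fail because the commutations needed to bring two copies adjacent may be blocked by intervening non-commuting generators ($\alpha_i$ commutes with neither $\alpha_{i\pm1}$ nor $\beta_i$). The genuinely delicate point is the interplay of the $\alpha$-Temperley–Lieb relations, the $\beta$-Temperley–Lieb relations, and the mixed substituted rules (2a),(3a),(2b),(3b) that link neighboring indices across the two types; controlling this interplay is exactly what a proof must accomplish, and is presumably why the statement is left as a conjecture.

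To make the reduced-word analysis rigorous I would try to promote it into a confluent rewriting system modulo commutation, through a critical-pair (Knuth–Bendix-style) analysis resolving all overlaps among (1),(2),(3) and their substituted cousins; a complete such system would yield unique normal forms, hence both finiteness and an exact count. The risk is that completion fails to terminate. A more structural alternative, which I regard as the most promising, is to seek a faithful finite combinatorial model: a two-colored planar-diagram calculus refining the pair of Jones diagrams produced by $p$, in which multiplication is diagram concatenation and the ten relations become diagram isotopies. Exhibiting such a model and proving it faithful would settle the conjecture directly, by identifying ${\mathcal O}_n$ with a finite set of diagrams, in exact analogy with the chord-diagram description of ${\mathcal J}_n$; one might also attempt an induction on $n$ by peeling off the top-index generators through an ideal decomposition, though the mixed rules make the inductive step subtle.
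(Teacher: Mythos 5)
This statement is left as a conjecture in the paper --- the authors only verify finiteness computationally for $n\le 6$ via GAP and explicitly state that finiteness ``is not apparent from the definition'' --- so there is no proof of record to compare yours against; your proposal must stand or fall on its own.

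It does not stand: there is a genuine gap, and you have correctly located it yourself. The reductions of your argument to simpler claims are all sound --- rule (4) does make $\alpha_i$ and $\beta_j$ commute exactly when $i\ne j$; the eight non-commutation rules do each strictly decrease word length (by your counts $2\to1$, $3\to1$, $4\to2$, $6\to2$, $9\to3$), so rewriting modulo the commutation congruence terminates; and finiteness would indeed follow from a uniform bound on the length of words to which no length-reducing rule applies even after commuting letters. But that last claim is the entire content of the conjecture, and nothing in your proposal establishes it. The pigeonhole idea fails for exactly the reason you name: $\alpha_i$ fails to commute with $\alpha_{i\pm1}$ and with $\beta_i$, so two occurrences of $\alpha_i$ in a long word can be permanently separated by an alternating barrier of blockers, and one must rule out arbitrarily long such barriers that never expose a Temperley--Lieb or substituted factor. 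Your appeal to Lemma~\ref{lemma:n=2} controls only the submonoid on a single index pair $\{\alpha_i,\beta_i\}$ and says nothing about how neighbouring indices interleave, which is where the difficulty lives. The fallback proposals (Knuth--Bendix completion modulo commutation, or a faithful two-coloured diagram calculus) are reasonable research directions --- the diagram calculus in particular is the natural analogue of the chord-diagram model that makes $|\mathcal{J}_n|=C_n$ --- but you present them as things one \emph{would try}, not things you have done, and you acknowledge that completion may not terminate and that faithfulness of a diagram model is unproven. So the proposal is a credible plan of attack, consistent with why the authors left the statement open, but it is not a proof.
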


\subsection{Green's classes}
%%%%%%%%%%%%%%%%%%%%%%%%%%%%%%%%%%%%

%We list Green's classes for each $n \leq 6$. In all computations the $H$-classes are singletons indicating that all subgroups in $\mathcal O_n$ are trivial. The number of non-identity elements in ${\mathcal O}_2$, ${\mathcal O}_3$, ${\mathcal O}_4$, ${\mathcal O}_5$ and $\mathcal O_6$ are
%6, 44, 293, 2179, 19086 respectively. These numbers indicate the distinct DNA origami structures with at most 2,3,4,5, or 6 scaffold strand folds.

%\todo{List observations about Green's classes.}

We have the following observations for 
Green's classes of $\mathcal{O}_n$ for general $n$.

%We provide some lemmas and conjectures about Green's classes of $\mathcal{O}_n$ for general $n$.

% \begin{figure}[h]
% 	\centering
% 	\begin{subfigure}[h]{.4\textwidth}
% 		\centering
% 		\includegraphics[scale=.7]{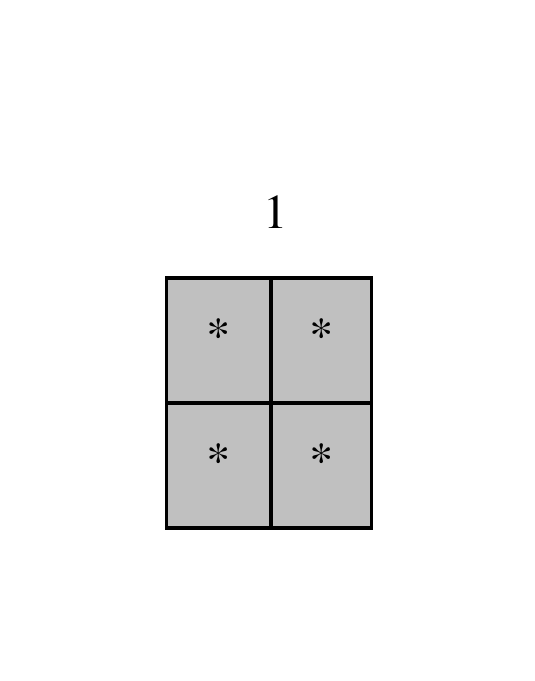}
% 		%\caption{$\alpha_i\beta_i\alpha_i\beta_i \rightarrow \alpha_i\beta_i$, $i$ odd}
% 		%\label{fig:3}
% 	\end{subfigure}%
% 	\begin{subfigure}[h]{.4\textwidth}
% 		\centering
% 		\includegraphics[scale=.2]{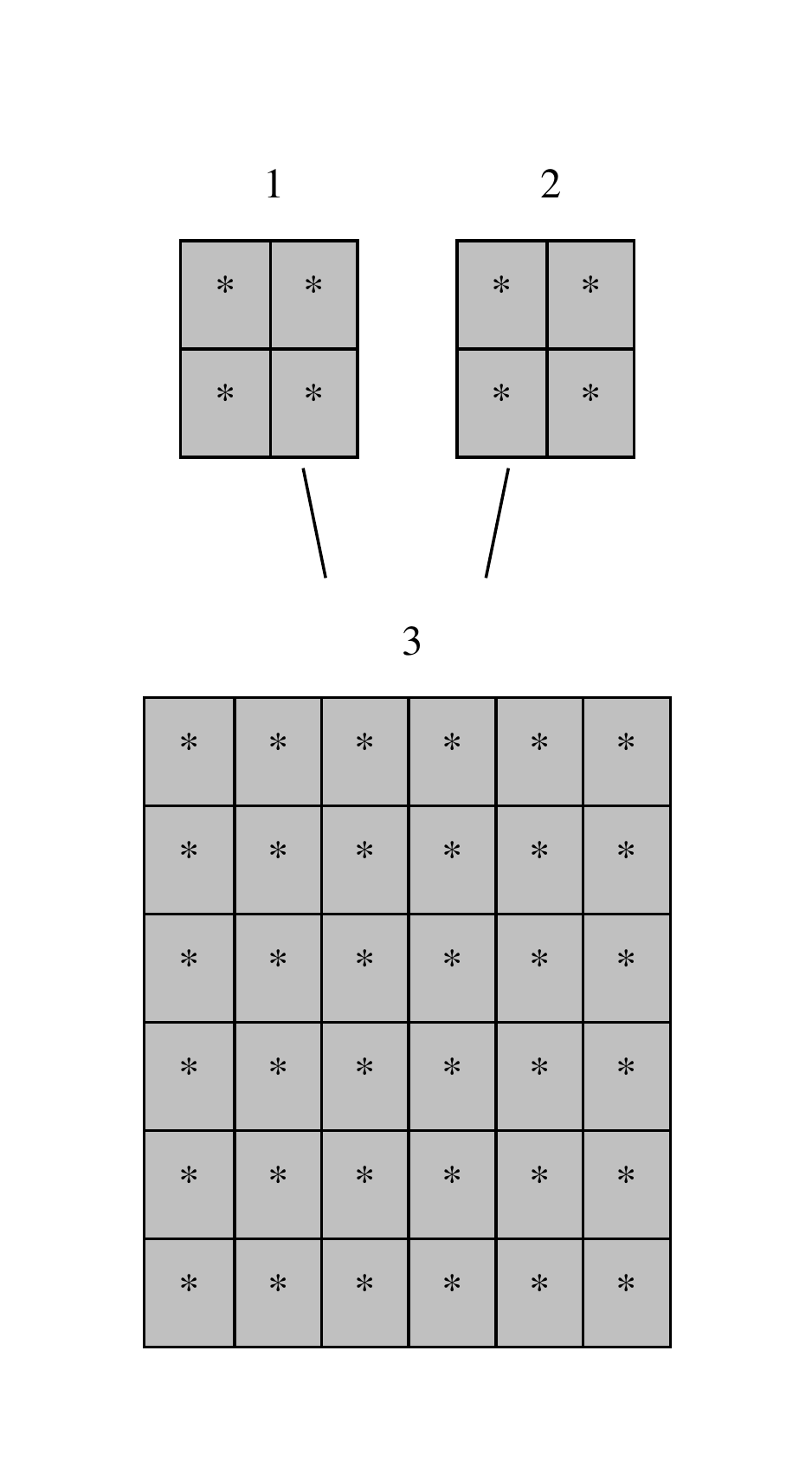}
% 		%\caption{$\beta_i\alpha_i\beta_i\alpha_i \rightarrow \beta_i\alpha_i$, $i$ odd}
% 		%\label{}
% 	\end{subfigure}
% 	\caption{$\mathscr{D}$-classes of ${\cal J}_3$ (left) and ${\cal O}_3$ (right) }
% 	\label{fig:3}
% \end{figure}

\begin{figure}[h]
    \begin{minipage}[h]{.49\textwidth}
       
        \begin{minipage}[h]{.49\textwidth}
            \centering
		    \includegraphics[scale=.45]{images/Jones3}
        \end{minipage}
        \begin{minipage}[h]{.49\textwidth}
            \centering
		    \includegraphics[scale=.13]{images/3stringsD}
        \end{minipage}
        
        \vspace{-.5 cm}
        
         \begin{minipage}[h]{.49\textwidth}
            \centering
		    \includegraphics[scale=.33]{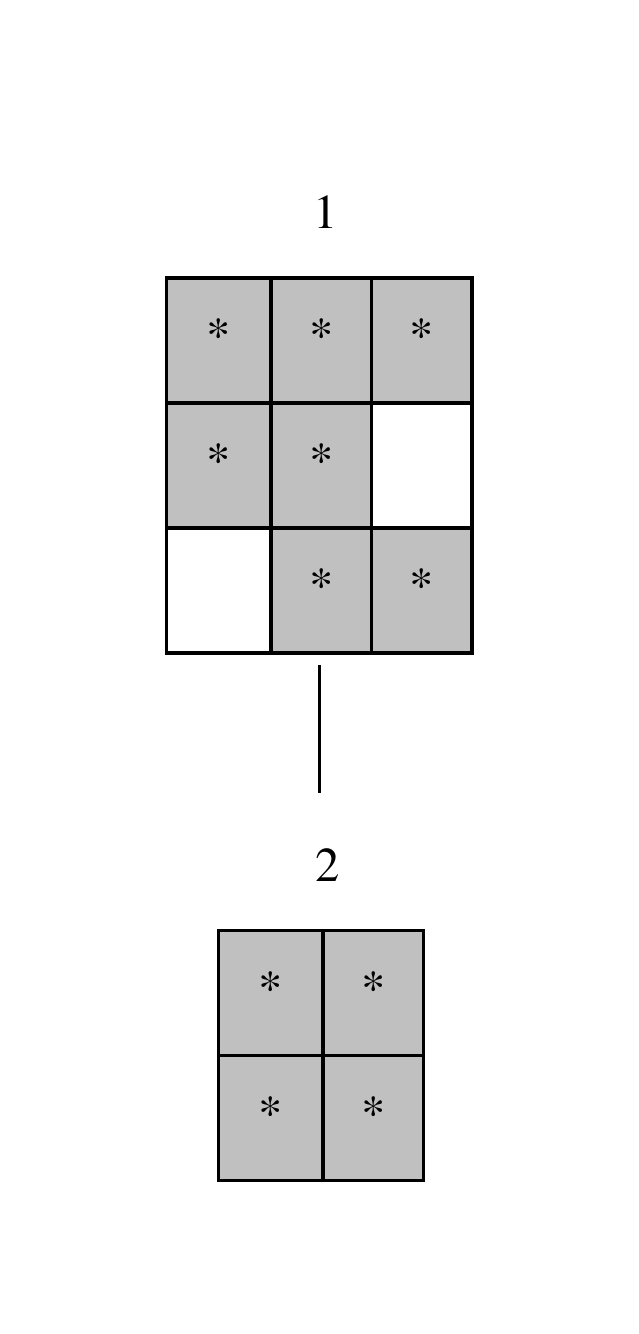}
        \end{minipage}
        \begin{minipage}[h]{.49\textwidth}
            \centering
		    \includegraphics[scale=.075]{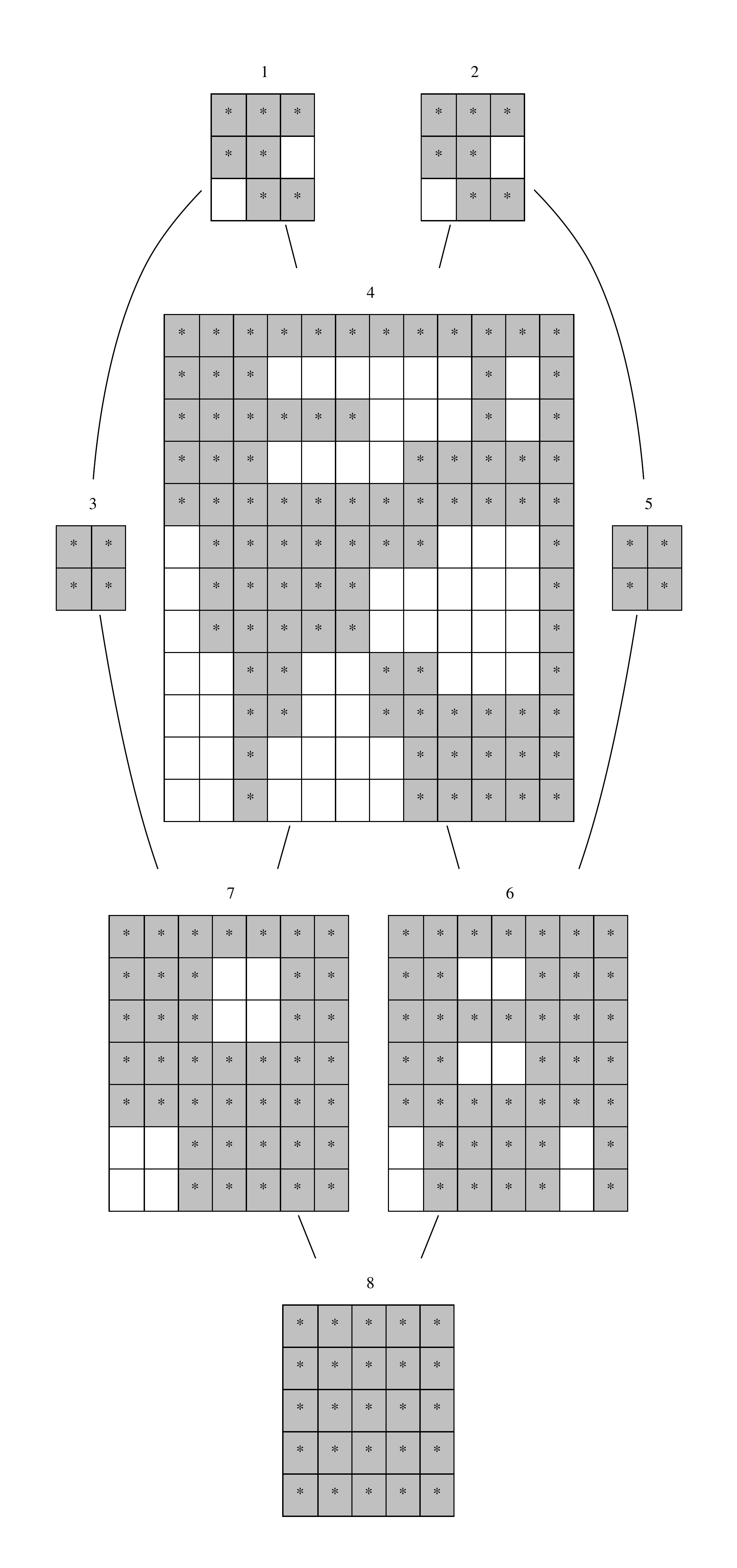}
        \end{minipage}
    \end{minipage}%
    \begin{minipage}[h]{.49\textwidth}
        \begin{minipage}[h]{.40\textwidth}
            
		    \includegraphics[scale=.30]{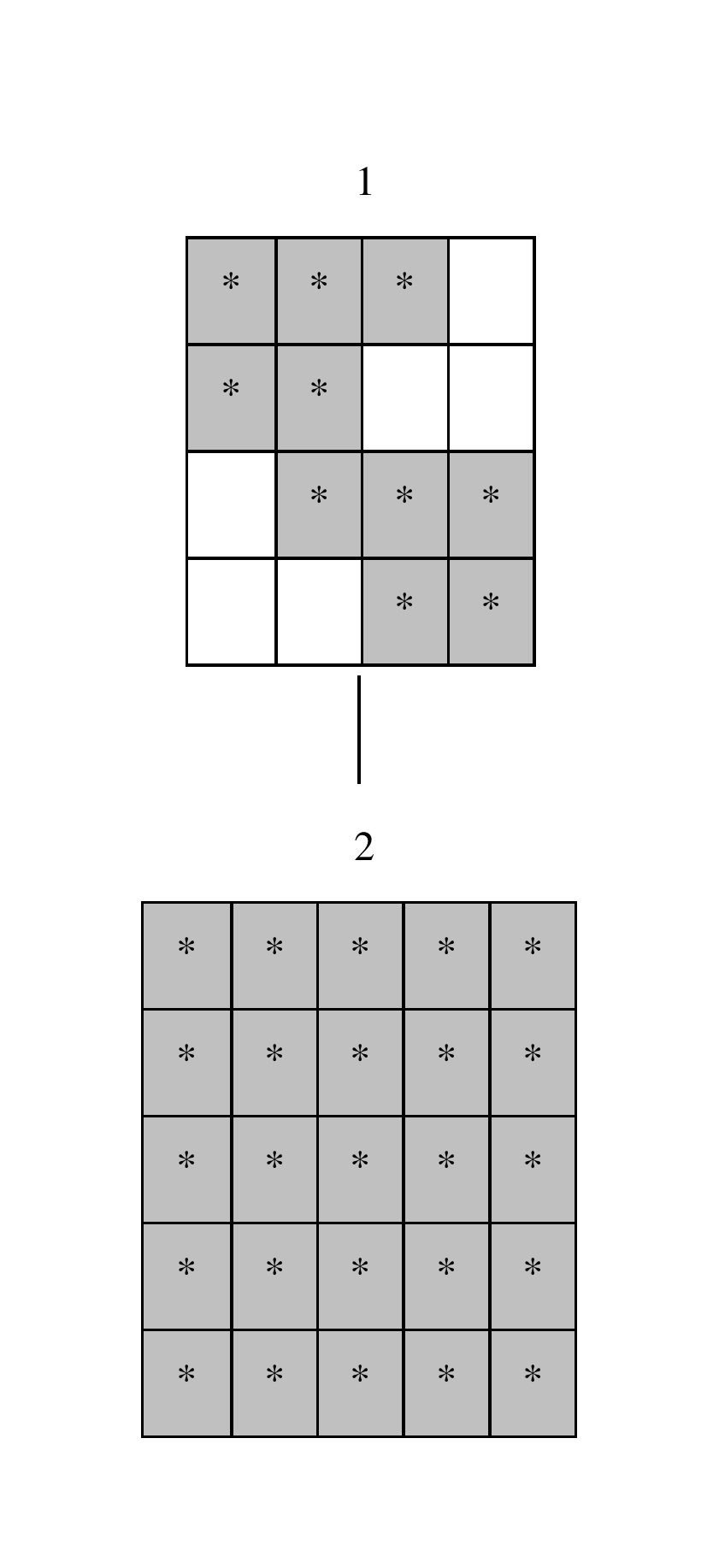}
        \end{minipage}
        \begin{minipage}[h]{.40\textwidth}
            
		    \includegraphics[scale=.06]{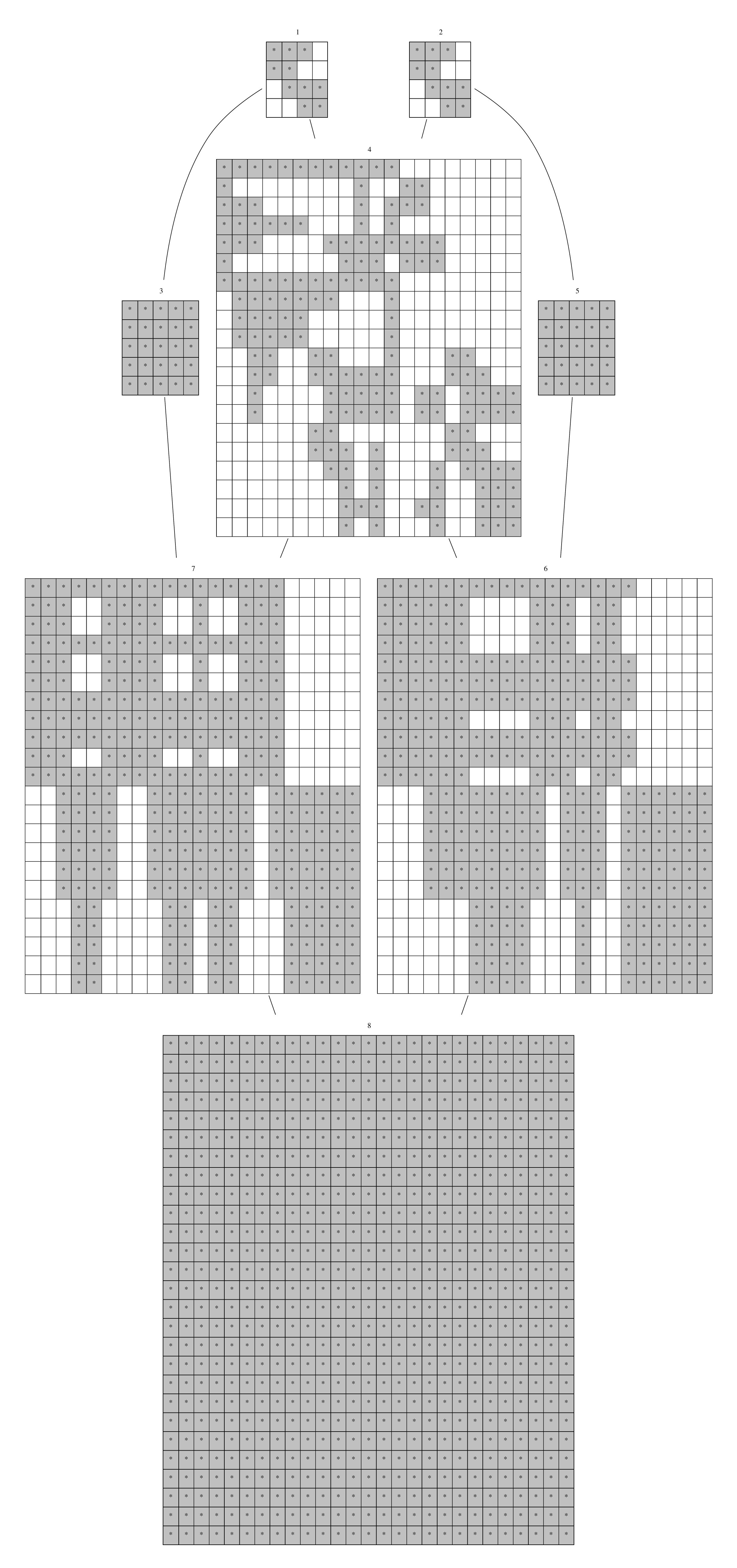}
        \end{minipage}
    \end{minipage}
    \caption{$\mathscr{D}$-classes of $\mathcal{J}_n$ (left) and $\mathcal{O}_n$ (right) for $n=3$ (top left), $n=4$ (bottom left), and  $n=5$ (right)}
    \label{Dclasses}
\end{figure}

\begin{lemma}\label{lemma1}
Let $x \in \mathcal{O}_n^{\alpha}$, $y \in \mathcal{O}_n^{\alpha\beta}$ be nonempty words and let $D_x$ and $D_y$ be the $\mathscr{D}$-classes containing $x$ and $y$, respectively. Then $D_x \neq D_y$.
\end{lemma}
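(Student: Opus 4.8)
The plan is to push the question into one copy of the Jones monoid via the projection $p_\beta$ and to exploit the fact that the identity of $\mathcal{J}_n$ is isolated under Green's relations. The map $p_\beta$ kills every $\alpha$ while recording every $\beta$, so it separates a pure-$\alpha$ element from one that genuinely involves $\beta$'s, and this is exactly the distinction the lemma needs.

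First I would record the standard fact that a monoid homomorphism $\phi\colon M\to N$ preserves Green's relations: if $a\,\mathscr{L}\,b$, write $a=sb$, $b=ta$, so $\phi(a)=\phi(s)\phi(b)$ and $\phi(b)=\phi(t)\phi(a)$, whence $N\phi(a)=N\phi(b)$, i.e. $\phi(a)\,\mathscr{L}\,\phi(b)$; the $\mathscr{R}$ case is symmetric, and $a\,\mathscr{D}\,b$ (witnessed by some $c$ with $a\,\mathscr{L}\,c\,\mathscr{R}\,b$) then gives $\phi(a)\,\mathscr{D}\,\phi(b)$. Note this needs no finiteness of $\mathcal{O}_n$. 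Applying it to the epimorphism $p_\beta$, I would then compute the two images. Since $x\in\mathcal{O}_n^\alpha$ is a nonempty product of $\alpha$'s only, $p_\beta(x)=1$. Since $y$ is a nonempty word of $\mathcal{O}_n^{\alpha\beta}$, it lies in $\mathcal{O}_n\setminus(\mathcal{O}_n^\alpha\cup\mathcal{O}_n^\beta)$, so by the invariance noted in the proof of Lemma~\ref{sub} every word representing $y$ contains at least one $\beta_j$; hence $p_\beta(y)$ is a nonempty product of generators $h_j$ of $\mathcal{J}_n$.

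The crux is to show that $p_\beta(y)\neq 1$ and that $1$ is alone in its $\mathscr{D}$-class in $\mathcal{J}_n$. Both rest on the fact that the only unit of the finite monoid $\mathcal{J}_n$ is the identity: a nonempty product of generators cannot equal $1$, for otherwise its leftmost factor $h_j$ would be right-invertible and hence (by finiteness) a unit; and if $a\,\mathscr{J}\,1$ then $1\in\mathcal{J}_n a\,\mathcal{J}_n$, say $1=uav$, which forces $u$ and then $a$ to be units, so $a=1$. Thus the $\mathscr{J}$-class of $1$ is $\{1\}$, and since $\mathscr{D}\subseteq\mathscr{J}$ always, the $\mathscr{D}$-class of $1$ in $\mathcal{J}_n$ is $\{1\}$ as well. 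This is precisely the singleton identity class recorded for Jones monoids in \cite{IdemP} and illustrated in Example~\ref{ex:J3}, so it may simply be quoted.

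Finally I would argue by contradiction: if $D_x=D_y$, i.e. $x\,\mathscr{D}\,y$ in $\mathcal{O}_n$, then applying $p_\beta$ yields $1=p_\beta(x)\,\mathscr{D}\,p_\beta(y)$ in $\mathcal{J}_n$, placing $p_\beta(y)$ in the $\mathscr{D}$-class of $1$ and forcing $p_\beta(y)=1$, contrary to the previous step. Hence $D_x\neq D_y$. The only genuinely substantive ingredient is the isolation of the identity class in $\mathcal{J}_n$; everything else is bookkeeping with the projection, so I expect the main (mild) obstacle to be simply stating and invoking the preservation of Green's relations under $p_\beta$ cleanly, together with citing or re-proving that the identity of $\mathcal{J}_n$ is $\mathscr{D}$-trivial.
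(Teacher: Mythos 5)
Your proof is correct, but it takes a genuinely different route from the paper's. The paper argues entirely inside $\mathcal{O}_n$: by the letter-content invariance established in the proof of Lemma~\ref{sub}, any $a$ with $y\,\mathscr{L}\,a$ or $y\,\mathscr{R}\,a$ can be written as $my$ or $ym$ and hence still contains both an $\alpha$ and a $\beta$, so the entire $\mathscr{D}$-class of $y$ lies in $\mathcal{O}_n^{\alpha\beta}\setminus\{1\}$ and cannot meet the pure-$\alpha$ element $x$. You instead push the problem through the projection $p_\beta$ into $\mathcal{J}_n$, using that monoid morphisms preserve Green's relations, that $p_\beta(x)=1$ while $p_\beta(y)$ is a nonempty product of the idempotent generators $h_j$ and hence not $1$, and that the identity of the finite monoid $\mathcal{J}_n$ is alone in its $\mathscr{D}$-class. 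All of these ingredients are sound (note that $h_j u=1$ already forces $h_j=h_j(h_ju)=1$ by idempotency, so the finiteness argument for units can be shortened, and $h_j\neq 1$ is standard from the faithful diagram representation cited in \cite{BDP,IdemP}). The paper's argument is shorter and more self-contained, needs nothing about $\mathcal{J}_n$, and applies verbatim to the corollary that follows; your argument is a clean application of general semigroup theory that ties the lemma to the morphism $p$ used elsewhere in the paper (Remark~\ref{remark:onto}), at the cost of importing the finiteness and trivial unit group of $\mathcal{J}_n$. Note also that the $\alpha$-versus-$\beta$ case of the corollary still needs a word under your approach (apply $p_\beta$, or $p_\alpha$, in the same way), whereas the paper's content-based argument covers it identically.
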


\begin{proof}
   By Lemma \ref{sub}, if $y \mathscr{L} a$, then $a \in \mathcal{O}_n^{\alpha\beta} $, and if $a \mathscr{R} b$, then $b \in \mathcal{O}_n^{\alpha\beta}$. Thus we cannot have $y \mathscr{D} x$.
\end{proof}

\begin{corollary}
The conclusion of Lemma \ref{lemma1} holds for $x \in \mathcal{O}_n^{\beta}$, $y \in \mathcal{O}_n^{\alpha\beta}$ and $x \in \mathcal{O}_n^{\alpha}$, $y \in \mathcal{O}_n^{\beta}$.
\end{corollary}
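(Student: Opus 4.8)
The plan is to promote the single-invariant idea behind Lemma~\ref{lemma1} to a two-coordinate invariant that tracks $\alpha$'s and $\beta$'s simultaneously. For a nonempty $w \in \mathcal{O}_n$ let $\chi_\alpha(w)=1$ if some (equivalently, any) word representing $w$ contains a letter $\alpha_i$, and $\chi_\alpha(w)=0$ otherwise; define $\chi_\beta(w)$ analogously. The proof of Lemma~\ref{sub} already supplies the two facts I need: these quantities are well defined, since no rewriting rule erases the last $\alpha$ or the last $\beta$, and multiplication never removes an existing $\alpha$ or $\beta$, since concatenation only adds letters. Hence $\chi_\alpha$ and $\chi_\beta$ are monotone under multiplication on either side.

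First I would verify that $\chi_\alpha$ and $\chi_\beta$ are constant on $\mathscr{D}$-classes. If $u\,\mathscr{L}\,v$ then $\mathcal{O}_n u = \mathcal{O}_n v$, so $u=m_1 v$ and $v=m_2 u$ for some $m_1,m_2\in\mathcal{O}_n$; monotonicity gives $\chi_\alpha(u)\ge\chi_\alpha(v)$ and $\chi_\alpha(v)\ge\chi_\alpha(u)$, whence $\chi_\alpha(u)=\chi_\alpha(v)$, and likewise $\chi_\beta(u)=\chi_\beta(v)$. The identical computation with right ideals handles $\mathscr{R}$. Since $u\,\mathscr{D}\,v$ means $u\,\mathscr{L}\,c\,\mathscr{R}\,v$ for some $c$, chaining the equalities through $c$ shows that $(\chi_\alpha,\chi_\beta)$ is a $\mathscr{D}$-invariant.

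It then suffices to read off signatures. A nonempty word of $\mathcal{O}_n^{\alpha}$ has $(\chi_\alpha,\chi_\beta)=(1,0)$, one of $\mathcal{O}_n^{\beta}$ has $(0,1)$, and one of $\mathcal{O}_n^{\alpha\beta}$ has $(1,1)$. For $x\in\mathcal{O}_n^{\beta}$ and $y\in\mathcal{O}_n^{\alpha\beta}$ the signatures differ in the $\alpha$-coordinate, while for $x\in\mathcal{O}_n^{\alpha}$ and $y\in\mathcal{O}_n^{\beta}$ they differ in both coordinates; in either case $x\,\mathscr{D}\,y$ is impossible, so $D_x\neq D_y$. (Lemma~\ref{lemma1} itself is the remaining case, where the coordinates differ only in $\beta$.)

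An attractive shortcut for the first of the two statements is the bar involution: one checks that $\overline{\,\cdot\,}$ sends each relation of Definition~\ref{origami-monoid} to another relation---rule (4) is mapped to itself as a family, and the two halves of (1a), (2a), (3a), (2b), (3b) are interchanged---so bar descends to an automorphism of $\mathcal{O}_n$ that interchanges $\mathcal{O}_n^{\alpha}$ and $\mathcal{O}_n^{\beta}$ while fixing $\mathcal{O}_n^{\alpha\beta}$ setwise, and applying it to Lemma~\ref{lemma1} yields the $\mathcal{O}_n^{\beta}$-versus-$\mathcal{O}_n^{\alpha\beta}$ case immediately. The one genuinely fiddly point, and the place where I expect to slow down, is confirming this closure of the full relation set under bar; the direct $\mathscr{D}$-invariance argument above sidesteps it entirely and is what I would present as the primary proof.
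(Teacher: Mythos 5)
Your proof is correct and takes essentially the route the paper intends: the corollary is stated without proof precisely because it is the same invariance argument as Lemma~\ref{lemma1}, resting on the observation in Lemma~\ref{sub} that the presence of an $\alpha$ (resp.\ a $\beta$) is unaffected by the rewriting rules and never destroyed by multiplication, so that it is constant on $\mathscr{L}$-, $\mathscr{R}$-, and hence $\mathscr{D}$-classes. Your signature $(\chi_\alpha,\chi_\beta)$ merely packages that observation explicitly, and it correctly handles the $\mathcal{O}_n^{\alpha}$ versus $\mathcal{O}_n^{\beta}$ case, where one must track the presence of a single letter type rather than membership in $\mathcal{O}_n^{\alpha\beta}$.
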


% \begin{remark}
% 	The $\mathscr{D}$-classes of $\mathcal{J}_n$ are isomorphic to the subset of $\mathscr{D}$-classes of $\mathcal{O}_n$ which contain only $\alpha$s. The same is true for the subset of $\mathscr{D}$-classes of $\mathcal{O}_n$ which contain only $\beta$s.
% 	Consequently, if $\mathcal{J}_n$ has $k$ non-trivial $\mathscr{D}$-classes, $\mathcal{O}_n$ has $2k$ $\mathscr{D}$-classes of this type.
% \end{remark}

% For example, in Fig. \ref{fig:4}, two copies of the $\mathscr{D}$-classes of $\mathcal{J}_4$ can be found in the $\mathscr{D}$-classes of $\mathcal{O}_n$.

\begin{remark}\label{remark:onto}
	If $\mc O_n$ is finite, then 
	each $\mathscr{D}$-class of $\mathcal{J}_n \times \mathcal{J}_n$ is an image of a $\mathscr{D}$-class of $\mathcal{O}_n$ by $p$.
	It follows from the definition of $p$ that every $D$-class of $\mc O_n$ maps into a $\mathscr{D}$-class of $\mc J_n$, and by Lemma 1.4 Ch. 5 in~\cite{Grillet} the map is also onto. We conjecture that this $\mathscr{D}$-class of $\mathcal{O}_n$ is unique. We show that this observation is true for $n\le 6$.
\end{remark}	

%\begin{conjecture}\label{conjecture:unique}
    
%\end{conjecture}

% %%%%%%%%%%%%%%%%%
\subsection{Green's classes for $n\le 6$}
% %%%%%%%%%%%%%%%%%
In this section we describe Green's $\mathscr{D}$-classes for $n \leq 6$. 
We used {\it GAP} to determine $\mathscr{D}$-classes of origami monoids ${\cal O}_n$ for $n \leq 6$, the structure is presented in 
Figs.~\ref{Dclasses} and~\ref{fig:d6}.
% In Figures~\ref{Dclasses}, the left sides are the $\mathscr{D}$-classes
% found in \cite{IdemP} (the class consisting of the identity element is not included in the figures).
Shaded squares represent $\mathscr{H}$-classes which contain an idempotent. We note that for $n \leq 6$, every $\mathscr{H}$-class of $\mathcal{O}_n$ is singleton, so each square in the figure represents precisely one element of $\mathcal{O}_n$.

For $n \leq 6$, since $\mathcal{O}_n$ is finite , the $\mathscr{J}$ and $\mathscr{D}$ relations coincide. A preorder $\leq_{\mathscr{D}}$ is defined on $\mathcal{O}_n$ by $a \leq_{\mathscr{D}} b$ if the two-sided principal ideal generated by $a$ is a subset of the two-sided principal ideal generated by $b$. This condition is equivalent to the existence of $x,y \in \mathcal{O}_n$ such that $xby=a$. Since any two elements of a $\mathscr{D}$-class generated the same principal ideal, this preorder may be extended to the set of $\mathscr{D}$-classes of $\mathcal{O}_n$ such that $D \leq_{\mathscr{D}} D'$ if for $a \in D$ and $b \in D'$, $a \leq_{\mathscr{D}} b$. The lines between $\mathscr{D}$-classes in the figures represent the lattice structure of this preorder. 

The relations between $\mathcal{O}_n$ and $\mathcal{J}_n$ described in Section 6.2 can be observed in Fig.~\ref{Dclasses}. We omit the $\mathscr{D}$-class consisting of only the empty word from the diagrams, which is maximal in the lattice of $\mathscr{D}$-classes. For each $n$, 2 copies of the $\mathscr{D}$-classes of $\mathcal{J}_n$ can be found as the $\mathscr{D}$-classes of $\mathcal{O}_n^{\alpha}$ and $\mathcal{O}_n^{\beta}$, respectively, in the $\mathscr{D}$-classes of $\mathcal{O}_n$. As described in Remark \ref{remark:onto}, these correspond to the cross product of one identity and one non-identity $\mathscr{D}$-class of $\mathcal{J}_n$. The other $\mathscr{D}$-classes are those of $\mathcal{O}_n^{\alpha\beta}$, and correspond to the cross product of two non-identity $\mathscr{D}$-classes of $\mathcal{J}_n$. Which pair of $\mathscr{D}$-classes of $\mathcal{J}_n$ correspond to which $\mathscr{D}$-class of $\mathcal{O}_n$ can be better seen in Fig. \ref{fig:d6}.

In Fig.~\ref{fig:d6}, we arrange the $\mathscr{D}$-classes of $\mathcal{O}_6$ to better illustrate the relation between the $\mathscr{D}$-classes of $\mathcal{J}_n$, although the same process may be applied to other $n$. On the right, the lattice structure of the $\mathscr{D}$-classes remains, applying left-to-right as well as top-to-bottom. The $\mathscr{D}$-classes along the top row and left column are the $\mathscr{D}$-classes of $\mathcal{O}_6^{\alpha}$ and $\mathcal{O}_6^{\beta}$ respectively, as previously described isomorphic to $\mc J_n$. For any $\mathscr{D}$-class of $\mathcal{O}_n^{\alpha\beta}$, the $\mathscr{D}$-classes which it maps onto are greater in the lattice structure. Thus the grid of $\mathscr{D}$-classes may be thought of as a table, with the row and column of any entry determining the image of the $\mathscr{D}$-class by $p_{\alpha}$ and $p_{\beta}$, respectively. Since rewriting relations are equivalent for $\alpha$ and $\beta$, the $\mathscr{D}$-classes are symmetric with respect to switching rows and columns. This can be easily seen in the $\mathscr{D}$-classes in the upper right and lower left corners. However, the rows and columns of any $\mathscr{D}$-class may be ordered arbitrarily, and are automated by GAP, making the symmetry non-obvious for other $\mathscr{D}$-classes.

\begin{figure}
    \begin{minipage}[h]{.49\textwidth}
        \includegraphics[scale=.02]{images/6stringsDClasses}
    \end{minipage}%
    \begin{minipage}[h]{.49\textwidth}
        \includegraphics[scale=.25]{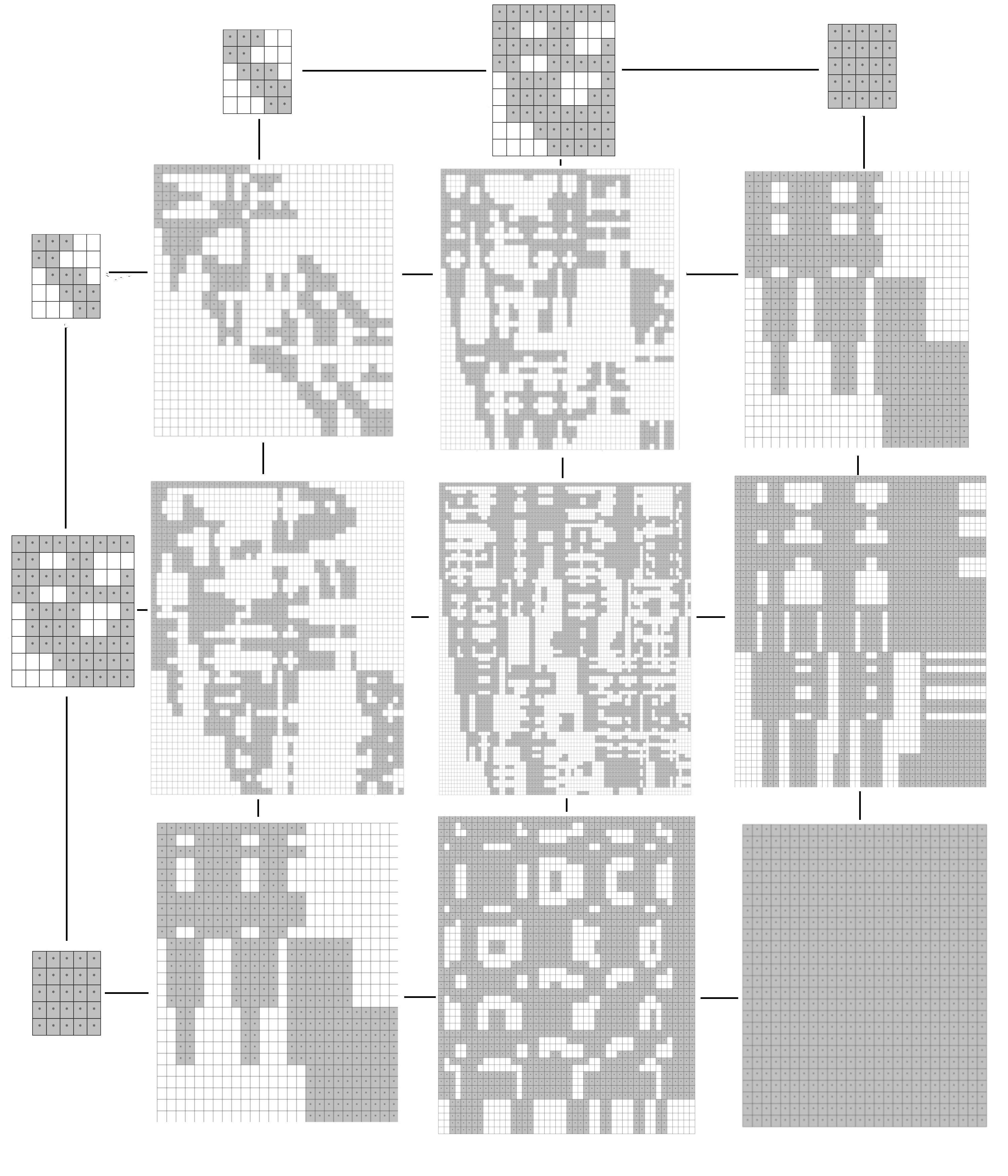}
    \end{minipage}

    \caption{$\mathscr{D}$-classes of $\mathcal{O}_6$ (left) and  re-arranged
    and resized to fit the grid (right)}
    \label{fig:d6}
\end{figure}

\vspace{-.5cm}

\section{Concluding remarks}
In this paper, motivated from similarity to Temperley-Lieb algebras,  we  introduced an algebraic system that describes DNA origami structures.
Generators in this system are defined such that they mimic basic building blocks of DNA origami. Following the structural properties of the DNA origami, we established rewriting rules as well as monoids whose elements conform to the relations obtained from these rules. To each DNA origami structure we can associate an element from an appropriate monoid. For example, the structure in Fig.~\ref{origami_clean} corresponds to the element whose normal form is $\alpha_1\alpha_3\alpha_5\beta_2\beta_4$.
We hope that such representations of DNA origami may provide a tool for distinguishing constructs.
%that these descriptions lead to efficient constructions.

The monoids introduced here are generalizations of Temperley-Lieb algebras, and we provide several conjectures with the goal of relating them to known monoids. For example, from our findings for $n\leq 6$, we conjecture that ${\mathcal O}_n$ are finite for all $n$, and $\mathscr{H}$-classes are singletons.
We also provide conjectures relating to the $\mathscr{D}$-classes of $\mathcal{O}_n$ and $\mathcal{J}_n$ under the morphism $p$.
Specifically, we conjecture that the $\mathscr{D}$-classes of $\mathcal{O}_n$ are in one-to-one correspondence with the $\mathscr{D}$-classes of $\mathcal{J}_n \times \mathcal{J}_n$.

% We also some make some observations about the idempotents of $\mathcal{O}_n$, which provide interesting questions for future study. All elements of $\mathcal{O}_3$ are idempotent, while all elements of

\section*{Acknowledgment}
This work is partially supported by 
NIH R01GM109459, and by 
NSF's CCF-1526485 and DMS-1800443.
This research was also partially supported by the Southeast Center for Mathematics and Biology, an NSF-Simons Research Center for Mathematics of Complex Biological Systems, under National Science Foundation Grant No. DMS-1764406 and Simons Foundation Grant No. 594594.

\end{document}